\let\thm@indent\indent}{\let\thm@indent\noindent}%
  {}{}
\theoremstyle{plain}
\newtheorem{THEOREM}{Theorem}[section]
\newtheorem{theorem}[THEOREM]{Theorem}
\newtheorem{corollary}[THEOREM]{Corollary}
\newtheorem{proposition}[THEOREM]{Proposition}
\theoremstyle{definition}
\theoremstyle{remark}
\newtheorem{remark}[THEOREM]{Remark}
\newcommand{\be}{\begin{equation}}
\newcommand{\ee}{\end{equation}}
\newcommand{\bea}{\begin{eqnarray}}
\newcommand{\eea}{\end{eqnarray}}
\newcommand{\rmd}{{\rm d}}
\newcommand{\bq}{\begin{equation}}
\newcommand{\eq}{\end{equation}}
\newcommand{\bqa}{\begin{eqnarray*}}
\newcommand{\eqa}{\end{eqnarray*}}
\newcommand{\cE}{\mathcal{E}}
\newcommand{\p}{\partial}
\def\cl{ \underline{\rho} }
\def \nl {\mathrm{nl}}
\def \loc {\mathrm{loc}}
\def\cH{\mathcal{H}}
\def\cL{\mathcal{L}}
\def\dz{\partial_z}
\def\cL{{\mathcal L}}
\def\T{{\mathbb T}}
\def\beq{\begin{equation}}
\def\eeq{\end{equation}}
\def \a {\alpha}
\def \g {\gamma}
\def \d {\delta}
\def \e {\varepsilon}
\def \l {\lambda}
\def \s {\sigma}
\def \t {\tau}
\def \th {\theta}
\def \cD {\mathcal{D}}
\def \cE {\mathcal{E}}
\def \cH {\mathcal{H}}
\def \cL {\mathcal{L}}
\def \cM {\mathcal{M}}
\newcommand{\N}{\ensuremath{\mathbb{N}}}   
\newcommand{\Z}{\ensuremath{\mathbb{Z}}}   
\newcommand{\R}{\ensuremath{\mathbb{R}}}   
\def \p {\partial}
\def \ss {\subset}
\renewcommand{\geq}{\geqslant}
\renewcommand{\ge}{\geqslant}
\renewcommand{\leq}{\leqslant}
\renewcommand{\le}{\leqslant}
\def \dx  {\, \mbox{d}x}
\def \dt  {\, \mbox{d}t}
\def \dy  {\, \mbox{d}y}
\def \dz  {\, \mbox{d}z}
\def \ds  {\, \mbox{d}s}
\def \dth  {\, \mbox{d}\th}
\def \ddt  {\frac{\mbox{d\,\,}}{\mbox{d}t}}
\def \dD  {\mbox{D}}
\begin{document}

\title[Entropy Hierarchies]{Entropy Hierarchies for equations of\\ compressible fluids and self-organized dynamics}

\author{Peter Constantin}
\address{Department of Mathematics, Princeton University, Princeton, NJ 08544}
\email{const@math.princeton.edu}

\author{Theodore D. Drivas}
\address{Department of Mathematics, Princeton University, Princeton, NJ 08544}
\email{tdrivas@math.princeton.edu}

\author{Roman Shvydkoy}
\address{Department of Mathematics, Statistics and Computer Science, University of Illinois at Chicago, 60607}
\email{shvydkoy@uic.edu}

\subjclass{92D25, 35Q35, 76N10}

\date{today}

\keywords{compressible Navier-Stokes, flocking, alignment, Cucker-Smale, fractional diffusion}

\thanks{\textbf{Acknowledgment.}  
 The research of PC was partially supported by NSF grant DMS-1713985.
Research of TD was partially supported by
NSF grant DMS-1703997.  Research of RS was supported in part by NSF
	grants DMS 1515705, DMS-1813351 and Simons Foundation. He thanks Princeton University for hospitality during the work on the paper.}

\begin{abstract}
	We develop a method of obtaining a hierarchy of new higher-order entropies in the context of compressible models with local and non-local diffusion, and isentropic pressure.  The local viscosity is allowed to degenerate as the density approaches vacuum.  The method provides a tool to propagate initial regularity of classical solutions provided no vacuum has formed and serves as an alternative to the classical energy method.  We obtain a series of global well-posedness results for state laws in previously uncovered cases  including  $p(\rho) = c_p \rho$.  As an application we prove global well-posedness of collective behavior models with pressure arising from agent-based Cucker-Smale system. 
\end{abstract}

\maketitle

\section{Introduction}

We consider a class of compressible fluid models in one space dimension with periodic boundary conditions
\begin{align}
	&\partial_t \rho + \partial_x (u \rho ) = 0, \label{eq:mass} \\
	&\partial_t (\rho u) + \partial_x (\rho u^2) =- \partial_x p(\rho) + \cD(u,\rho) +\rho f\label{eq:mom}, \\
	&(\rho,u)|_{t = 0} = (\rho_0, u_0). \label{eq:initv}
\end{align}
Here $f \in L^\infty(\T \times \R^+)$ is a bounded external force,   $\cD(u,\rho)$ is a diffusion operator and the pressure $p:=p(\rho)$
is a given function of density.  The central feature of dissipation operators $\cD$ considered here is the existence of another quantity, denoted $Q$, which allows one to express the term $\rho^{-1} \cD$ as a transport of $Q$:
\begin{equation}\label{e:QD}
\dD_t Q: = Q_t + u Q_x = \rho^{-1} \cD(u,\rho).
\end{equation}
One classical example is the local dissipation in divergence form
\begin{equation}\label{e:L}
\cD(u,\rho) = (\mu(\rho) u_x)_x.
\end{equation}
This example embodies a broad family of models that appear in various physical phenomena such as  barotropic compressible fluids, slender jets, shallow water waves, etc.  Here, $\mu(\rho)$ designates the dynamic viscosity of the fluid which is typically given by the constitutive power law
\be\label{EOS}
\mu(\rho)=c_\mu \rho^\alpha, \qquad    c_\mu>0, \qquad \alpha \geq 0.
\ee
In this case,
\begin{equation}\label{e:locQ}
Q = \frac{\mu(\rho) \rho_x}{\rho^2}.
\end{equation}
In compressible barotropic fluid models, the pressure equation of state is of the form
\be\label{PEOS}
p(\rho)=c_p \rho^\gamma, \qquad    c_p>0, \qquad \gamma > 0.
\ee
Some special cases include a barotropic monatomic gas with
$\alpha={1/3}$ and $\gamma={5/3}$,  shallow water waves with $\alpha=1$ and $\gamma=2$ and
viscous slender jet dynamics with $\alpha=1$ and $\gamma=1/2$ (but with negative pressure $c_p<0$). See   \cite{CDNP18}, \cite{haspot2014existence} and \cite{mellet2008existence} for recent studies and further discussion.

Recently, another class of examples, referred to as singular Euler-Alignment models, with \emph{non-local} density-dependent dissipation appeared in the context of collective behavior \cite{ST1}:
$$
\cD(u,\rho) = \rho [ \cL^s(\rho u) - u\cL^s(\rho)].
$$
Here $\cL^s := -(-\partial_{xx} )^{s/2}$ for $s\in (0,2)$ is the fractional Laplacian given by the integral representation
\[
\cL^s f(x)  = \int_\T \phi_s (x-y) (f(y) - f(x)) \dy, \qquad \phi_s (x) = c \sum_{k \in \Z} \frac{1}{|x+2\pi k|^{1+s}}.
\]
The transport representation of the dissipation comes as a consequence of the commutator structure in this case, where one finds
\begin{equation}\label{e:flockQ}
	Q = \p_x^{-1} \cL^s \rho.
\end{equation}
With regard to the pressure law, two cases arise naturally from the corresponding agent-based Cucker-Smale  system introduced in \cite{CS2007a,CS2007b}. One is the pressureless case, $p = 0$, which presents itself as a limit to monokinetic concentration $f \to \rho \d_{v = u}$ in kinetic formulation with a strong local alignment forcing, see \cite{FK2019,KV2015} for rigorous study. Second is an isentropic pressure given by
\be\label{PEOSnl}
p(\rho)=c_p \rho, \qquad    c_p \geq 0,
\ee
which arises from stochastically forced systems.  In the strong dissipation limit, the probability density $f$ converges to a Maxwellian distribution, see \cite{KMT1,KMT2,KMT3}.   While the pressureless case is well understood by now and is covered in extensive studies \cite{ST1,ST2,ST3,ST-topo,Do2018}, the pressured case has virtually been omitted in the literature, except for smooth communication \cite{Choi2019}.  Both cases fall under the general class covered in our present study.

Since the relation \eqref{e:QD} is linear, one can access a family of hybrid local/non-local dissipation models as well:
\begin{equation}\label{e:hyb}
\begin{split}
\partial_t (\rho u) + \partial_x (\rho u^2) &=- \partial_x p(\rho) + c_{\nl}  \cD_{\nl}(u,\rho) + c_{\loc}  \cD_{\loc}(u,\rho) +\rho f \\
 \cD_{\loc}(u,\rho) & = (\mu(\rho) u_x)_x, \quad   \cD_{\nl}(u,\rho) = \rho [ \cL^s(\rho u) - u\cL^s(\rho)] 
\end{split}
\end{equation}
In the context of collective behavior these encompass multi-scale alignment models -- classical power law at large scales, and strong singular alignment at local small scales. Although multi-scaling has already appeared on the kinetic level in the analysis of hydrodynamic limit performed in \cite{KMT2}, the net effect of the local alignment considered there averages down to zero in the macroscopic formulation. We argue, however, that local dissipation, along with the plethora of constitutive laws \eqref{EOS}, appears naturally as a singular limit $s \to 2$ of so-called topological model introduced in \cite{ST-topo}. Let us recall the construction.  It is observed in many biological behavioral studies, \cite{Exp2008,Exp2012}, that ``agents", such as birds or fish, probe local environment sensing only a fixed number of other agents around them. Consequently, the actual communication neighborhood is determined by topologies determined by the density of the flock rather than the classical Euclidean one.  The topological density-dependent distance can be defined by the mass of intermediate segment between agents (see \cite{ST-topo} for multi-dimensional construction):
\[
d(x,y) = \left| \int_x^y \rho(z,t) \dz \right|.
\]
Since communication in dense areas progresses slower, the mass-distance should decrease effective viscosity of the alignment, leading one to consider a kernel inversely dependent on $d$:
\[
\phi(x,y) = \frac{h(x-y)}{|x-y|^{1+s - \t} d^\t(x,y)},
\]
where $\t>0$ is a parameter that gauges contribution of the topological part and $h$ is a local cut-off function.  The corresponding operator is given by
\[
\cL^{s,\t} f(x)  = \int_\T \phi(x,y) (f(y) - f(x)) \dy.
\]
As $s \to 2$, the normalized operator formally converges to the  local elliptic operator in divergence form:
\[
(2-s) \cL^{s,\t} f \to (\rho^{-\t} f_x)_x.
\]
Consequently, the alignment term converges to
\begin{equation}\label{key}
(2-s)\rho [ \cL^{s,\t}(u \rho) -   u \cL^{s,\t} \rho ] \to (\rho^{2-\t} u_x)_x.
\end{equation}
So, we obtain an example of the local operator \eqref{e:L} with topological viscosity given by $\mu(\rho) = \rho^{2-\t}$.  To summarize, in the context of flocking,  the hybrid model \eqref{e:hyb} describes a flock driven by a strong local topological alignment and global power law communication.  

Returning to the general discussion, we make a key observation --  once the transport quantity $Q$ is identified, one can rewrite the entire system as a  system of conservation laws with the momentum equation given by the transport equation
\begin{equation}
\dD_t X = - h_x(\rho)  + f, \qquad h'(r):= \frac{p'(r)}{r},
\end{equation}
for the new quantity
\[
X = u+ Q.
\]
We will exploit this structure to prescribe an algorithm of constructing a hierarchy of entropy-like quantities which are extremely useful in studying regularity of such systems. The first member in the hierarchy is given by the well known Bresch-Desjardins entropy \cite{bresch2003existence}
\begin{equation}\label{e:H0}
\cH_0 = \frac12 \int_\T  \rho X^2 \dx + \int_\T \pi_0(\rho) \dx,
\end{equation}
where  $\pi_0$ is the pressure potential given by 
\begin{equation}\label{pi0}
\pi_0(\rho)= \rho \int_{{\bar \rho}}^\rho \frac{p(s)}{s^2} \ds, \quad \text{for some } \bar \rho >0.
\end{equation}
The algorithm, described in detail later in Section \ref{ss:HE}, gives rise to higher order entropies, $\cH_1, \cH_2$, $\dots$, each controlling corresponding higher order derivatives $X_x,X_{xx}$, $\rho_x, \rho_{xx}$, etc. Note that in the pressureless case, in particular, $X_x$ is precisely the ``$e$-quantity" discovered in \cite{CCTT2016}, which determines a threshold for regularity of solutions in the bounded kernel case.  

With the use of this method we present, in a unified way, a range of global existence and continuation results for local, non-local, or hybrid models. Let us state our main results now.

\begin{theorem}[Continuation Criterion]\label{t:mainF} Consider the system \eqref{eq:mass}--\eqref{eq:initv} with 
\be
 \cD(u,\rho) := c_{\nl}  \cD_{\nl}(u,\rho) + c_{\loc}  \cD_{\loc}(u,\rho),
\ee
with equations of state given by \eqref{EOS}, \eqref{PEOS}, and $f \in L^\infty(R^+; C^n)$. Consider the cases
\begin{enumerate}
\item purely non-local: $c_{\loc}=0$,  $c_{\nl}> 0$, in which case we require $s\in (\frac53,2)$, $\g>0$. 
\item purely local:  $c_{\loc}> 0$,  $c_{\nl}= 0$, in which case we require $(\alpha\geq 0,\g>1)$ or $(\a >\frac12,\g>0)$.
\item mixed: $c_{\loc}> 0$,  $c_{\nl}> 0$, in which case we require 
\[
\alpha\geq 0,\g>1, s\in (0,2)\quad \text{ or } \quad \a >\frac12,\g>0, s\in (0,2)\quad \text{ or } \quad\a\geq 0,\g>0, s\in(\frac32,2).
\]
\end{enumerate}
Suppose $(u,\rho) \in H^{m+1-\s} \times H^m$, $m \geq 2$, is a local solution on time interval $(0,T)$. Suppose also that 
\bq\label{e:rmin}
\cl :=\inf_{t\in [0, T^*)}\min_{x\in\T}\rho(x, t)>0.
\eq
Then the solution belongs to the class
\begin{align}
u &\in L^\infty(0,T; H^{m+1-\sigma} ) \cap L^2(0,T; H^m) \label{e:apru3} \\ 
\rho &\in L^\infty(0,T;  H^m)   \cap  L^2(0,T; H^{\sigma/2 + m})  \label{e:aprrho3}
\end{align}
where $\sigma$ is the order of the operator $ \cD(u,\rho)$, and hence can be extended locally beyond $T_*$. 
\end{theorem}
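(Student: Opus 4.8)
The plan is to establish uniform-in-time a priori bounds on $\|u\|_{H^{m+1-\sigma}}+\|\rho\|_{H^m}$ over $[0,T_*)$, depending only on the initial data, the lower bound $\cl$, $\|f\|_{L^\infty_t C^n}$, and $T_*$, and then to invoke the local well-posedness theory to continue the solution past $T_*$. The key structural observation is the recasting via $X=u+Q$, for which the momentum equation becomes the transport equation $\dD_t X = -h_x(\rho)+f$. Since no vacuum has formed, the viscosity is uniformly elliptic, $\mu(\rho)=c_\mu\rho^\alpha \geq c_\mu \cl^\alpha>0$, so that the dissipation operator $\cD$ is genuinely parabolic of order $\sigma$ on $[0,T_*)$; this is what converts the degenerate system into one to which the smoothing estimates apply.

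First I would run the entropy hierarchy $\cH_0,\cH_1,\dots,\cH_N$ developed in Section \ref{ss:HE}, with $N$ chosen large enough to reach the $H^m$ level. The base case $\cH_0$ is the Bresch--Desjardins entropy \eqref{e:H0}, whose dissipation law controls the basic energy together with the first-order dissipation of $X$ (namely $\int_0^{T_*}\!\int_\T \mu(\rho)X_x^2\,\dx\,\dt$ in the local case, and its fractional analogue $\int_0^{T_*}\!\int_\T \rho\,|(-\partial_{xx})^{s/4}X|^2\,\dx\,\dt$ in the non-local case), after absorbing the forcing by Gr\"onwall. Proceeding inductively, each higher entropy $\cH_k$ produces, upon differentiation in time, a coercive dissipation term of order $\sigma/2$ acting on $\partial_x^k X$, which is bounded below by $\cl$-dependent ellipticity, plus transport, commutator, and pressure error terms. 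The parameter restrictions in cases (1)--(3) are exactly those under which these error terms---in particular the pressure contribution built from $h_x(\rho)$ with $h'(r)=p'(r)/r$---can be absorbed into the available dissipation: the comparison of the derivative cost of the pressure nonlinearity against the smoothing exponent $\sigma/2$ is what forces $\gamma>1$ when $\alpha$ is allowed to vanish, $\alpha>\tfrac12$ when $\gamma$ is only positive, and $s>\tfrac53$ (resp. $s>\tfrac32$) in the purely non-local (resp. mixed) regimes.

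From the resulting bounds I would then recover the stated regularity class. The hierarchy yields $X\in L^\infty(0,T_*;H^{m+1-\sigma})\cap L^2(0,T_*;H^m)$; writing $u=X-Q$ and inserting the explicit form of $Q$ (which costs one derivative of $\rho$ in the local case \eqref{e:locQ} and is of order $s-1$ in $\rho$ in the non-local case \eqref{e:flockQ}) converts this into \eqref{e:apru3}. Feeding the so-controlled velocity into the mass equation $\rho_t+(u\rho)_x=0$ and propagating $H^m$-regularity by the standard transport and commutator estimates (using that $H^m$ is an algebra for $m\geq 2$ in one dimension) gives $\rho\in L^\infty(0,T_*;H^m)$, while the parabolic smoothing inherent in the dissipation upgrades this to the gain $\rho\in L^2(0,T_*;H^{\sigma/2+m})$ of \eqref{e:aprrho3}.

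The main obstacle is closing the top-order estimate, where the pressure term $h_x(\rho)$ and, in the non-local and hybrid regimes, the fractional commutator $\cL^s(\rho u)-u\cL^s(\rho)$ must be balanced against the dissipation. Controlling the commutator requires fractional Leibniz and Kato--Ponce-type estimates adapted to the density-weighted structure, and the sharpness of the admissible ranges for $(\alpha,\gamma,s)$ reflects precisely the threshold at which the order-$\sigma$ smoothing dominates the derivative loss produced by these nonlinear terms. Once the a priori bounds are in hand, the continuation statement follows because the local existence time furnished by the well-posedness theory depends only on the norms appearing in \eqref{e:apru3}--\eqref{e:aprrho3} and on $\cl$, all of which remain finite as $t\uparrow T_*$.
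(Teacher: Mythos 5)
There is a genuine gap: your proposal inverts the dissipative mechanism that makes the entropy hierarchy work. The quantity $X = u + Q$ and all of its descendants $X_n = \rho^{-1}\partial_x X_{n-1}$ satisfy \emph{pure transport} equations, $\dD_t X = -h_x(\rho) + f$; no balance relation in the system contains dissipation acting on $X$. In particular, the terms you claim the Bresch--Desjardins entropy produces, $\int_0^{T}\!\int_\T \mu(\rho)X_x^2\,\dx\,\dt$ and its fractional analogue, simply do not appear: the balance \eqref{e:H0bal} yields \emph{density} dissipation through $\int_\T Q_x p(\rho)\dx \leq 0$ (see \eqref{e:dissFlock}, \eqref{e:dissloc}), while velocity dissipation occurs only in the energy balance \eqref{e:energy} and acts on $u$, not on $X$. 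Likewise, the higher entropies $\cH_k$ yield coercive terms of the form $-c_{\nl}\|\rho\|_{H^{k+s/2}}^2 - c_\loc\|\rho\|_{H^{k+1}}^2$ --- obtained by expanding $\ddt\int_\T\pi_k\dx$ and substituting $u_x = \rho Y - c_{\nl}\cL^s\rho - c_\loc(\mu(\rho)\rho_x/\rho^2)_x$ --- and it is against these density norms that every residual term is interpolated and absorbed. An induction built on hypothetical coercive control of $\partial_x^k X$ cannot be closed, because no such control is generated by the equations. (Relatedly, the dichotomy $(\gamma>1,\alpha\geq 0)$ versus $(\gamma>0,\alpha>\tfrac12)$ is not a top-order absorption threshold; it arises already at the $\cH_0$ level, in the separate routes to an a priori $L^\infty$ upper bound on the density.)

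The second, consequent, gap is that your recovery of the stated regularity class is circular. You propose to get $X \in L^\infty H^{m+1-\sigma}\cap L^2 H^m$ from the hierarchy, set $u = X - Q$, and then propagate $\rho\in H^m$ through the mass equation by transport and commutator estimates, upgrading to $\rho\in L^2 H^{m+\sigma/2}$ by ``parabolic smoothing.'' But the mass equation is pure transport --- the operator $\cD$ sits in the momentum equation and acts on $u$ --- so there is no smoothing mechanism for $\rho$ available there; and propagating $H^m$ regularity of $\rho$ through $\rho_t + (u\rho)_x = 0$ costs roughly $u \in L^1(0,T;H^{m+1})$ (the term $\int_\T\partial_x^m\rho\,\rho\,\partial_x^{m+1}u\,\dx$ cannot be traded away with only $u\in L^2(0,T;H^m)$, which is exactly what you are trying to prove). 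The paper's argument runs in the opposite direction: $\rho\in L^\infty H^m$ comes directly from the coercive potential $\pi_m = \tfrac12 h'(\rho)(\partial_x^m\rho)^2/\rho^{2m}$ (this is where the no-vacuum hypothesis \eqref{e:rmin} enters, making $\pi_m$ comparable to $|\partial_x^m\rho|^2$), $\rho \in L^2 H^{m+\sigma/2}$ comes from the entropy dissipation, and only afterwards is the velocity class read off from $u = X - Q$ together with $u_x = \rho Y - c_{\nl}\cL^s\rho - c_\loc(\mu(\rho)\rho_x/\rho^2)_x$; this also explains why $u$ lands in the weaker space $H^{m+1-\sigma}$. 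Your overall skeleton (a priori bounds via the hierarchy, then continuation by Proposition \ref{prop:local}) matches the paper, but the engine driving the bounds must be the density dissipation, not a nonexistent smoothing of $X$.
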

\begin{remark}
We note that the correspondence 
\begin{equation}\label{e:ruorder}
	(\rho \in H^m) \sim (u \in H^{m+1-\sigma})
\end{equation}
 is natural for reasons to be clarified later. 
\end{remark}

The statement about purely local models in our Theorem \ref{t:mainF} provides an alternate proof (and extension) of Theorem 1.1 in \cite{CDNP18}. In that paper, an ``active potential" $w$ which satisfied a less-degenerate parabolic equation was used to propagate higher regularity provided the density nowhere vanished.  In our work, we establish the same result by analyzing the entropy hierarchy.  

Our next result establishes global well-posedness for a class of hybrid models, which is proved by propagating a lower bound on the density and appealing to Theorem \ref{t:mainF}.

\begin{theorem}[Global Existence]\label{t:mainF2} Assume
 $f \in L^\infty(\R^+; C^n)$, $p\in C^{n+1}_\loc(\R^+)$ with $p'(r) >0$ for any $r>0$ and
\be
c_{\nl} \geq 0, c_\loc >0, \quad \alpha\in(0,1/2).
\ee
Then any given local classical solution with non-vacuous initial data enjoys a priori lower bound \eqref{e:rmin} on its interval of existence. Consequently, any non-vacuous initial condition $(u_0,\rho_0) \in H^{m+1-\s} \times H^m$, $m \geq 2$, gives rise to a unique global solution in the range of parameters stated in Theorem \ref{t:mainF}. 
\end{theorem}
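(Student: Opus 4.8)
The plan is to reduce everything to the pointwise-in-time lower bound \eqref{e:rmin}; once that is in force the global statement follows verbatim from the Continuation Criterion, Theorem~\ref{t:mainF}, by the usual open--closed argument on the maximal interval of existence. To produce \eqref{e:rmin} I would control, \emph{uniformly in time on any finite interval} $[0,T_*)$, the quantity $\int_\T \rho Q_{\loc}^2\dx$ where $Q_{\loc} = \mu(\rho)\rho_x/\rho^2 = c_\mu \rho^{\alpha-2}\rho_x$, and then exploit $\alpha<\tfrac12$ to read this as control of a strictly negative power of $\rho$. Indeed, with $\theta := \tfrac12 - \alpha>0$ one has $\rho^{\alpha-3/2}\rho_x = -\theta^{-1}\partial_x(\rho^{-\theta})$, so $\int_\T \rho Q_{\loc}^2\dx = c_\mu^2\theta^{-2}\|\partial_x(\rho^{-\theta})\|_{L^2}^2$. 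A bound on this, combined with conservation of mass $\int_\T \rho\dx = M$ (which supplies a point $x_0$ with $\rho^{-\theta}(x_0)\le (M/|\T|)^{-\theta}$) and the one-dimensional estimate $\|g - g(x_0)\|_{L^\infty}\le |\T|^{1/2}\|g_x\|_{L^2}$, yields $\|\rho^{-\theta}\|_{L^\infty}\le C$ and hence $\rho \ge C^{-1/\theta}>0$.

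First I would establish the two a priori bounds feeding this mechanism: the basic energy $\cE = \tfrac12\int_\T \rho u^2\dx + \int_\T \pi_0(\rho)\dx$ and the Bresch--Desjardins entropy $\cH_0$ of \eqref{e:H0}. Using the transport form $\dD_t X = -h_x(\rho)+f$ together with mass conservation, the convective terms in $\tfrac{d}{dt}\cH_0$ cancel, and since $\pi_0''(\rho)=p'(\rho)/\rho = h'(\rho)$ the pressure work term $-\int_\T \rho X h_x\dx$ splits into the exact cancellation $\int_\T u_x p(\rho)\dx + \tfrac{d}{dt}\int_\T \pi_0(\rho)\dx = 0$ plus the genuinely dissipative contributions
\[
-\int_\T Q\, p'(\rho)\rho_x\dx = -c_{\loc} c_\mu\int_\T \rho^{\alpha-2}p'(\rho)\rho_x^2\dx + c_{\nl}\int_\T p(\rho)\,\cL^s\rho\dx \le 0,
\]
where the nonlocal term is nonpositive by the Córdoba--Córdoba inequality applied to the convex primitive of $p$. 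Thus $\tfrac{d}{dt}\cH_0 \le \int_\T \rho X f\dx$, while the analogous energy balance (testing the momentum equation with $u$, and using $\int_\T u\,\cD\dx\le 0$) gives $\tfrac{d}{dt}\cE \le \int_\T \rho u f\dx$. In both cases the forcing is controlled by $\|f\|_{L^\infty}M^{1/2}(\int_\T\rho u^2\dx)^{1/2}$ via Cauchy--Schwarz, hence by $C(1+\cE)$ resp.\ $C(1+\cH_0)$, so (using that the convex potential $\pi_0$ is bounded below, which keeps $\cE,\cH_0$ coercive) Grönwall yields $\cH_0(t)+\cE(t)\le C(T_*,\|f\|_\infty,M,\text{data})$ on $[0,T_*)$.

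Next I would convert these into the needed bound on $\int_\T \rho Q_{\loc}^2\dx$. Since $X = u + Q$ with $Q = c_{\loc} Q_{\loc} + c_{\nl} Q_{\nl}$ and $Q_{\nl} = \partial_x^{-1}\cL^s\rho$, boundedness of $\cH_0$ and $\cE$ gives $\int_\T \rho Q^2\dx \le 2\int_\T \rho X^2\dx + 2\int_\T \rho u^2\dx \le C$. The decisive point is that the cross term is \emph{nonnegative}:
\[
\int_\T \rho Q_{\loc} Q_{\nl}\dx = \frac{c_\mu}{\alpha}\int_\T \rho^{\alpha}(-\partial_{xx})^{s/2}\rho\dx \ge 0,
\]
again by Córdoba--Córdoba (with $\Phi(\rho)=\rho^{\alpha+1}/(\alpha+1)$, convex since $\alpha>0$), after integrating by parts against $\partial_x Q_{\nl} = \cL^s\rho$. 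Hence $\int_\T \rho Q^2\dx \ge c_{\loc}^2 c_\mu^2\int_\T \rho^{2\alpha-3}\rho_x^2\dx$, so $\int_\T \rho^{2\alpha-3}\rho_x^2\dx \le C$ uniformly on $[0,T_*)$, which is exactly the input to the mechanism of the first paragraph. This produces \eqref{e:rmin}, and Theorem~\ref{t:mainF} then delivers the global solution in the stated parameter range.

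I expect the main obstacle to be arranging the entropy balance so that every term carries a usable sign: concretely, the two applications of the Córdoba--Córdoba inequality (the nonlocal density dissipation in $\tfrac{d}{dt}\cH_0$ and, more importantly, the nonnegativity of $\int_\T \rho Q_{\loc} Q_{\nl}\dx$, which is what lets the local part $Q_{\loc}$ be isolated from the full $Q$ when $c_{\nl}>0$), together with checking that for the general pressure $p$ with $p'>0$ the potential $\pi_0$ is convex and bounded below so that Grönwall closes. The restriction $\alpha\in(0,\tfrac12)$ enters decisively here: $\alpha<\tfrac12$ is precisely what makes $\theta>0$, turning an $L^2$ bound on $\partial_x(\rho^{-\theta})$ into a lower bound on $\rho$, while $\alpha>0$ secures the convexity used in the cross-term estimate.
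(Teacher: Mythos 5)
Your proposal is correct and follows essentially the same route as the paper's proof: uniform-in-time bounds on $\cE$ and $\cH_0$ (forcing handled by Gr\"onwall via \eqref{e:auxfur}), positivity of the cross term $\int_\T \rho\, Q_{\loc} Q_{\nl}\dx$ (the paper's \eqref{poscross}) to isolate $\int_\T \rho\, Q_{\loc}^2\dx$, then the $\alpha<\tfrac12$ mechanism — an $L^2$ bound on $\partial_x(\rho^{\alpha-1/2})$ plus conservation of mass yielding $\|\rho^{\alpha-1/2}\|_{L^\infty}\le C$ and hence the lower bound \eqref{e:rmin} — and finally the continuation criterion of Theorem \ref{t:mainF}. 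The only cosmetic difference is that you invoke the C\'ordoba--C\'ordoba inequality where the paper uses the equivalent symmetrization/monotonicity argument for the nonlocal terms.
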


In the purely local case, Theorem \ref{t:mainF2} provides an alternate proof to that of Mellet and Vasseur  \cite{mellet2008existence} who proved global well-posedness in the parameter range $\alpha <1/2$ and $\gamma>1$.
As another application, we obtain global existence for collective behavior models.

\begin{corollary}
	Any collective behavior model, $\g=1$, with multi-scale diffusion $c_{\nl}, c_\loc > 0$  in the range of parameters $\a \in(0,1/2)$, $s\in (3/2,2)$ is globally well-posed for initial data in the class  $(u_0,\rho_0) \in H^{m-1} \times H^m$, $m \geq 2$.
\end{corollary}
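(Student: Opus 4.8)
The plan is to recognize this statement as a direct specialization of the two main results, so that the task reduces to verifying that the collective behavior model with the stated parameters satisfies the hypotheses of both Theorem \ref{t:mainF2} and Theorem \ref{t:mainF}, and to identify the correct dissipation order $\sigma$. First I would record the structural data of the model. The choice $\g = 1$ corresponds to the isentropic pressure law \eqref{PEOSnl}, $p(\rho) = c_p \rho$, which is smooth on $\R^+$ and satisfies $p'(\rho) = c_p > 0$; the forcing arising from the Cucker--Smale alignment lies in $L^\infty(\R^+; C^n)$; and the diffusion is the hybrid operator \eqref{e:hyb} with both $c_{\nl} > 0$ and $c_\loc > 0$, local viscosity $\mu(\rho) = c_\mu \rho^\alpha$, and $\a \in (0, 1/2)$.

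Next I would invoke Theorem \ref{t:mainF2} to preclude vacuum formation. Its hypotheses---$f \in L^\infty(\R^+; C^n)$, $p \in C^{n+1}_\loc(\R^+)$ with $p' > 0$, together with $c_{\nl} \geq 0$, $c_\loc > 0$, $\a \in (0, 1/2)$---are all met, the requirement $c_{\nl} \geq 0$ being satisfied a fortiori by $c_{\nl} > 0$. Consequently any local classical solution issued from non-vacuous initial data enjoys the a priori lower bound \eqref{e:rmin} on its entire interval of existence, and Theorem \ref{t:mainF2} already upgrades this to a unique global solution \emph{provided} the parameters fall in the admissible range of Theorem \ref{t:mainF}.

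It therefore remains to check that range. Since both diffusion coefficients are strictly positive we are in the mixed case (3) of Theorem \ref{t:mainF}, where one of three subranges must hold. Because $\g = 1$ the first subrange ($\g > 1$) is excluded, and because $\a < 1/2$ the second ($\a > 1/2$) is excluded; however the third subrange, $\a \geq 0$, $\g > 0$, $s \in (3/2, 2)$, is satisfied exactly by $\a \in (0, 1/2)$, $\g = 1$, $s \in (3/2, 2)$. Here the dominant dissipation is the second-order local term $(\mu(\rho) u_x)_x$, so the operator order is $\sigma = 2$, which via the correspondence \eqref{e:ruorder} pins down the initial class $H^{m+1-\sigma} \times H^m = H^{m-1} \times H^m$ appearing in the statement.

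Combining the two theorems then yields the conclusion: a local solution exists and is unique by the standard well-posedness theory for this quasilinear parabolic system at the stated regularity, the no-vacuum bound from Theorem \ref{t:mainF2} activates the continuation criterion of Theorem \ref{t:mainF} at every time, and hence the solution propagates its regularity and extends globally. I expect no genuine analytic obstacle here, since the real work is absorbed into the proofs of the two theorems; the only subtlety worth flagging is that the borderline linear pressure $\g = 1$ is \emph{not} covered by the first subrange of the mixed case, and it is precisely the extra smoothing afforded by $s > 3/2$ that compensates. Thus the corollary itself is a bookkeeping verification that the collective-behavior parameters land in that third subrange.
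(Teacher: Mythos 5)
Your proposal is correct and follows essentially the same route the paper intends: the corollary is stated as an immediate specialization of Theorem \ref{t:mainF2} (no-vacuum propagation for $c_\loc>0$, $\a\in(0,1/2)$) combined with the third subrange of the mixed case in Theorem \ref{t:mainF}, with $\sigma=2$ fixing the data class $H^{m-1}\times H^m$. The only slight misstatement --- the Cucker--Smale alignment enters as the non-local dissipation $\cD_{\nl}$, not as the external force $f$ --- is immaterial to the argument.
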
 

Our final result concerns the long-time behavior of the velocity and density fields in models which possess a non-local dissipation component.  In particular, we show that the energy inequality together with the non-local analogue of Bresch-Desjardins entropy  imply flocking in an $L^2$-sense.

\begin{theorem}[Non-local ``Second Law" Implies Flocking]\label{t:mainF3} Consider the forceless system \eqref{eq:mass}--\eqref{eq:initv} with $c_{\nl}>0$, $c_\loc \geq 0$ and pressure law given by \eqref{PEOSnl}. Then any classical solution undergoes flocking behavior in the weighted $L^2$-sense:
\begin{equation}
\int_{\T\times \T} |u(x)-u(y)|^2 \rho(x) \rho(y)\dx \dy  + \left\|\rho(t) -\fint\rho_0 \dx\right\|_{L^1(\mathbb{T})}^2 \lesssim \frac{\ln t}{t}.
\end{equation}
\end{theorem}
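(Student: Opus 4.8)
The plan is to play the kinetic energy balance against the non-local Bresch--Desjardins entropy \eqref{e:H0}, extracting from each a dissipation integral that controls, respectively, the velocity spread and the density spread, and then to upgrade time-integrability to a pointwise rate by monotonicity. Throughout I fix $\bar\rho=\fint\rho_0\dx$ (the conserved mean) in \eqref{pi0}, let $\bar u=\int_\T\rho_0u_0\dx/\int_\T\rho_0\dx$ be the conserved mean velocity, and note that the target functional is unchanged if $u$ is shifted by a constant. First I would record the energy law: testing \eqref{eq:mom} against $u$, symmetrizing the non-local term by the evenness of $\phi_s$ and integrating the local term by parts, gives $\frac{\dd}{\dd t}\cE=-D$ with
\begin{equation*}
D=\frac{c_{\nl}}{2}\int_\T\int_\T\phi_s(x-y)\rho(x)\rho(y)\big(u(x)-u(y)\big)^2\dy\dx+c_{\loc}\int_\T\mu(\rho)u_x^2\dx.
\end{equation*}
Discarding conserved constants, $\cE$ equals the nonnegative fluctuation energy $\tilde\cE:=\tfrac12\int_\T\rho(u-\bar u)^2\dx+c_p\int_\T\rho\ln(\rho/\bar\rho)\dx$, so that $\frac{\dd}{\dd t}\tilde\cE=-D$ and $\tilde\cE$ is monotone decreasing.

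Next I would differentiate $\cH_0$. Writing $X=u+Q$, which by linearity of \eqref{e:QD} satisfies $\dD_t X=-c_p(\ln\rho)_x$ in the forceless, $p=c_p\rho$ case, and using mass conservation, one finds $\frac{\dd}{\dd t}\tfrac12\int_\T\rho X^2\dx=c_p\int_\T\rho X_x\dx=c_p\int_\T\rho u_x\dx+c_p\int_\T\rho Q_x\dx$, while a direct computation gives $\frac{\dd}{\dd t}\int_\T\pi_0(\rho)\dx=-c_p\int_\T\rho u_x\dx$. The two $\int_\T\rho u_x$ terms cancel, leaving $\frac{\dd}{\dd t}\cH_0=c_p\int_\T\rho Q_x\dx$. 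Evaluating $Q_x=c_{\nl}\cL^s\rho+c_{\loc}\big(\mu(\rho)\rho_x/\rho^2\big)_x$ and using $\int_\T\rho\,\cL^s\rho=-\tfrac12\int_\T\int_\T\phi_s(x-y)(\rho(x)-\rho(y))^2\dy\dx\le0$ together with $\int_\T\rho\big(\mu(\rho)\rho_x/\rho^2\big)_x\dx=-\int_\T\mu(\rho)\rho_x^2/\rho^2\dx\le0$, I obtain $\frac{\dd}{\dd t}\cH_0=:-D_{\mathrm{BD}}\le0$, so $\cH_0\ge0$ is also monotone decreasing. This is the step I expect to be the crux: the cancellation and the sign of $\int_\T\rho Q_x$ are exactly what turn the non-local BD entropy into a bona fide Lyapunov functional for the density.

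With both laws in hand, the remaining estimates rest on $c_0:=\inf_\T\phi_s>0$ (the periodized kernel is bounded below on $\T$). From the energy law, $D\ge\frac{c_{\nl}c_0}{2}\int_\T\int_\T\rho(x)\rho(y)(u(x)-u(y))^2\dy\dx$, so $D$ dominates the velocity part of the target; from the BD law, $D_{\mathrm{BD}}\ge c_pc_{\nl}c_0|\T|\,\|\rho-\bar\rho\|_{L^2(\T)}^2$. Integrating the two monotone bounds yields $\int_0^\infty\int_\T\int_\T\rho\rho(u(x)-u(y))^2\,\dy\dx\dt<\infty$ and $\int_0^\infty\|\rho-\bar\rho\|_{L^2}^2\dt<\infty$. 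I then compare with $\tilde\cE$: since $\int_\T(\rho-\bar\rho)\dx=0$, one has $c_p\int_\T\rho\ln(\rho/\bar\rho)\dx=c_p\int_\T g(\rho)\dx$ with $g(\rho):=\rho\ln(\rho/\bar\rho)-(\rho-\bar\rho)\ge0$, and because $\rho\ln\rho$ grows subquadratically, $g(\rho)\le C_{\bar\rho}(\rho-\bar\rho)^2$ pointwise for all $\rho\ge0$ (crucially, no vacuum lower bound is needed). Hence the entropy part of $\tilde\cE$ is $\lesssim\|\rho-\bar\rho\|_{L^2}^2$, while the Csisz\'ar--Kullback--Pinsker inequality gives $\|\rho-\bar\rho\|_{L^1}^2\lesssim\int_\T\rho\ln(\rho/\bar\rho)\dx$. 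Combining, the entire target functional is $\lesssim\tilde\cE$ and, simultaneously, $\int_0^\infty\tilde\cE\dt<\infty$.

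Finally I would upgrade integrability to a rate by monotonicity: since $\tilde\cE\ge0$ is nonincreasing and integrable, $\tfrac{t}{2}\tilde\cE(t)\le\int_{t/2}^t\tilde\cE(s)\ds\le\int_0^\infty\tilde\cE\ds$, whence $\tilde\cE(t)\lesssim 1/t\le \ln t/t$, and the stated bound follows from target $\lesssim\tilde\cE$. (In fact this route gives the slightly stronger $1/t$; the $\ln t/t$ leaves room for a coarser treatment of the density.) The two places demanding care are the identity for $\frac{\dd}{\dd t}\cH_0$ above, which is justified for classical, vacuum-free solutions where the transport form \eqref{e:QD} and all integrations by parts are legitimate, and the structural point that density homogenization genuinely needs $c_p>0$: velocity alignment uses only $c_{\nl}>0$ through $c_0$, whereas the decay of $\|\rho-\bar\rho\|$ is driven by the pressure via $D_{\mathrm{BD}}$, which degenerates as $c_p\to0$.
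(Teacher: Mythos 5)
Your proposal is correct, and its first half coincides with the paper's own argument: both rest on the same two dissipation laws --- the energy balance \eqref{e:energy} and the Bresch--Desjardins balance \eqref{e:H0bal} with the sign identities \eqref{e:dissFlock}, \eqref{e:dissloc} --- together with the uniform kernel bound $\inf_\T \phi_s>0$, the Galilean reduction to zero total momentum, the pointwise comparison between relative entropy and $L^2$ fluctuation (the paper uses it in the chain $|\rho-1|_2^2\geq c_0\int_\T\rho\log\rho\dx\geq c_0\int_\T\pi_0\dx$, you use it in the equivalent form $g(\rho)\le C_{\bar\rho}(\rho-\bar\rho)^2$), and Csisz\'ar--Kullback at the end. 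Where you genuinely diverge is the decay mechanism. The paper uses \eqref{e:envar} to write $\int_\T\rho|u|^2\dx=c_1\cE-c_2\int_\T\pi_0\dx$, obtaining the differential inequality $\frac{\dd}{\dd t}\cE\le -c_1\cE+F$ with $F:=c\int_\T\pi_0\dx\in L^1(\R_+)$, and then runs Duhamel, estimating the convolution $\int_0^te^{-c_1(t-s)}F(s)\ds$ along a logarithmically spaced sequence of times $t_m$; it is this convolution bookkeeping that produces the $\ln t/t$ rate. You instead note that the energy dissipation by itself dominates $\int_\T\rho u^2\dx$ (after the momentum reduction), so \emph{both} halves of the Lyapunov functional $\tilde\cE$ are separately time-integrable --- the kinetic half from the energy law, the potential half from the BD law via the pointwise comparison --- and then invoke the elementary fact that a nonnegative, nonincreasing, integrable function is $O(1/t)$. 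This is shorter, dispenses with Duhamel entirely, and yields the strictly stronger rate $1/t$ (indeed $o(1/t)$, since $t\,\tilde\cE(t)\le 2\int_{t/2}^t\tilde\cE\ds\to0$), of which the stated $\ln t/t$ is a weakening. The paper's convolution argument is the more robust tool in situations where the dissipation controls the functional only up to an $L^1$ forcing and no integrable monotone majorant is available, but in this forceless setting your route is both simpler and sharper. The two steps you flag as delicate do check out: the cancellation giving $\frac{\dd}{\dd t}\cH_0=c_p\int_\T\rho Q_x\dx$ is exactly \eqref{e:H0bal} specialized to $p=c_p\rho$, and your pointwise bound holds on all of $[0,\infty)$ because $g(\rho)/(\rho-\bar\rho)^2$ extends continuously (value $1/(2\bar\rho)$ at $\rho=\bar\rho$, value $1/\bar\rho$ at $\rho=0$) and vanishes as $\rho\to\infty$, so no vacuum assumption is needed, consistent with the paper.
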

We note that that in the pressurized case, as opposed to pressureless, the density always converges to a uniform state selected by its average. It is an indirect consequence of stochastic diffusion that leads to persistent mixing and eventual homogenization of the flock density. Analogous behavior was also observed in the study of Choi \cite{Choi2019}, under the assumption of globally bounded velocity field $u$. Note, however, that such assumption is not guaranteed a priori due to lack of the maximum principle in the pressured system.

\section{Hierarchy of Entropies Method}\label{ss:HE}

As observed in the Introduction, due to the transport formulation of the dissipation \eqref{e:QD} one can rewrite the entire momentum equation \eqref{eq:mom} as a transport equation for the new quantity
\[
X = u + Q,
\]
\begin{equation}
\dD_t X = - h_x(\rho)  + f, \qquad h'(r):= \frac{p'(r)}{r}.
\end{equation}
Now \eqref{eq:mass}-\eqref{eq:mom} becomes as a system of conservation law for the new pair of unknowns $(\rho, \rho X)$.  We will exploit this structure to prescribe an algorithm of constructing a hierarchy of entropy-like quantities. First, let us make a general observation  --  if we have two quantities, $X$ and $\rho$, one is transported and the other is conserved 
\[
\dD_t X = 0, \quad \rho_t+ (u \rho)_x = 0,
\]
then the ``energy" given by $\frac12 \int_\T \rho X^2 \dx$ is preserved for all time. In the presence of the pressure such conservation is destroyed,
\[
\ddt \frac12 \int_\T \rho X^2 \dx = -\int_\T  X p_x(\rho)\dx + \int_\T \rho f X \dx,
\]
and the pressure term splits into two elements coming from $X$
\[
X p_x(\rho) = u p_x(\rho) + Q p_x(\rho).
\]
It turns out that the $Q$-term is in fact dissipative in all the examples we considered so far. So the only term that needs to be eliminated is $u p(\rho)_x$. This is done with the use of the pressure potential given by
\begin{equation}
\pi_0(\rho)= \rho \int_{{\bar \rho}}^\rho \frac{p(s)}{s^2} \ds, \quad \text{for some } \bar \rho >0.
\end{equation}
Indeed, 
\begin{equation}\label{e:pi0}
\ddt  \int_\T \pi_0(\rho) \dx =  \int_\T u p_x(\rho) \dx.
\end{equation}
We thus recover what is known as  Bresch-Desjardins's entropy
\begin{equation}\label{e:H0}
\cH_0 = \frac12 \int_\T  \rho X^2 \dx + \int_\T \pi_0(\rho) \dx.
\end{equation}
According to the computations above we obtain the following balance relation
\begin{equation}\label{e:H0bal}
\ddt \cH_0 =  \int_\T Q_x p(\rho) \dx + \int_\T \rho f X \dx.
\end{equation}
In parallel, due to \eqref{e:pi0}, we obtain an energy balance relation for the energy of the system given by 
\be \label{e:e}
\cE =  \frac12 \int_\T  \rho |u|^2 \dx + \int_\T \pi_0(\rho) \dx,
\ee
\begin{equation}\label{e:energy}
\ddt \cE = \int_\T u \cD(u,\rho) \dx + \int \rho u f \dx.
\end{equation}
In all cases of interest the pressure term in \eqref{e:H0bal} is sign-definite provided $p'(r) \geq 0$.  Indeed, in the non-local case \eqref{e:flockQ} we obtain
\begin{equation}\label{e:dissFlock}
\int_\T Q_x p(\rho) \dx = \int_\T p(\rho) \cL^s \rho \dx = - \frac12 \int_{\T^2} \phi_{s,\rho}(x,y) (\rho(y) - \rho(x))^2 \dx \dy \leq 0,
\end{equation}
where 
\[
\phi_{s,\rho}(x,y) = \phi_s (x,y)\int_0^1 p'(\th \rho(x) + (1-\th) \rho(y)) \dth.
\]
In the local case \eqref{e:locQ} we find
\begin{equation}\label{e:dissloc}
\int_\T Q_x p(\rho) \dx =  - \int_\T \frac{\mu(\rho) \rho^2_x p'(\rho)}{\rho^2} \dx \leq 0.
\end{equation}
Consequently, the initial entropy $\cH_0$ gives control over $\rho X^2 \in L^\infty_t L^1_x$.  Together with the energy conservation, this in turn controls the solo-density term:
\[
\int_\T  \rho |Q|^2 \dx \leq \int_\T  \rho |u|^2 \dx + \int_\T  \rho X^2\dx
\]
which will be used to extract initial regularity information on the density in each of local and non-local cases separately. 

As to the hybrid case, we have $Q = Q_{\nl} + Q_\loc$. The key observation is that we can extract control on each of the terms separately. Indeed, writing
\[
\int_\T \rho|Q_{\nl}+Q_\loc|^2  \dx =\int_\T  \rho |Q_{\nl}|^2 \dx+ \int_\T  \rho |Q_{\loc}|^2 \dx  +  2 \int_\T  \rho Q_{\nl} Q_{\loc}\dx,
\]
we observe that the integral of the cross-dissipation is non-negative:
\begin{align}
\int_\T  \rho Q_{\nl} Q_{\loc}\dx =  \int_\T    \p_x^{-1} \cL^s \rho  \frac{\mu(\rho) \rho_x}{\rho}\dx=  \int_\T    \p_x^{-1} \cL^s \rho\psi(\rho)_x\dx =  -\int_\T    \psi(\rho)\cL^s \rho\dx \geq 0\label{poscross}
\end{align}
where $\psi'(r) ={\mu(r)}/{r}$. The non-negativity holds, in fact, provided $\mu(r) \geq 0$, which is manifestly true for positive viscosities.

Coming back to the entropy construction we now present the next step in the hierarchy.   Noting that in the pressureless case the quantity $X_x$ would have satisfied the continuity equation, and hence, in combination with the density the new variable $Y= \frac{1}{\rho} X_x$ would have been transported.  By analogy with the previous, we would then start construction with the pressureless term $\rho Y^2 = \rho^{-1} X_x^2$. Note that $X_x$ satisfies
\begin{equation}\label{e:eeq}
\p_t X_x + (u X_x)_x = -h_{xx}(\rho) +f_x.
\end{equation}
Hence, in conjunction with mass conservation,
\[
\dD_t Y = - \rho^{-1} h_{xx}(\rho) + \rho^{-1} f_x.
\]
We thus  obtain
\begin{equation}\label{e:balX}
\ddt \frac12\int_\T \rho Y^2\dx = - \int_\T  h_{xx}(\rho) Y \dx + \int_\T  f_x Y \dx.
\end{equation}
The appropriate next order pressure potential that eliminates the first term on the right hand side is given by
\begin{equation}
\pi_1(\rho,\rho_x) = \frac12 h'(\rho) \frac{ \rho_x^2}{\rho^2}.
\end{equation}
The details of this computation will be provided in the sections below.  We thus arrive at the next entropy
\begin{equation}\label{e:H1}
\cH_1 =  \frac12\int_\T \rho Y^2 \dx + \int_\T 	\pi_1(\rho,\rho_x) \dx.
\end{equation}
the algorithm is now clear. For the second order entropy we denote $Z = \frac{1}{\rho} Y_x$ and define
\begin{equation}\label{e:H2intro}
\begin{split}
\cH_2 &=  \frac12\int_\T \rho Z^2 \dx + \int_\T 	\pi_2 \dx \\
\pi_2 & = \frac12 h'(\rho) \frac{\rho_{xx}^2}{\rho^4}.
\end{split}
\end{equation}
Continuing in the same fashion we can design an entropy-like quantity of any order, where the pressure potential is given by 
\[
\pi_n  = \frac12 h'(\rho) \frac{(\p^n_x\rho)^2}{\rho^{2n}},
\]
while the kinetic term is constructed inductively,
\[
X_n = \frac{1}{\rho}\p_x X_{n-1}.
\]
We form the $n$-th entropy accordingly,
\[
\cH_n =  \frac12\int_\T \rho X_n^2 \dx + \int_\T 	\pi_n \dx.
\]

With the help of this hierarchy we establish a direct control over any higher order regularity of solution, consistent with that of the initial datum, where the relative smoothness of $u$ and $\rho$ mentioned in \eqref{e:ruorder} naturally equilibrates the order of and the velocity and density terms as they enter into an expression for $X_n$.   It should be noted however that these entropies do not decay precisely as the first element $\cH_0$. Instead, they satisfy  ODEs with residual terms. 
Controlling those residual terms presents the main technical component of the method.

\section{Global Existence and Flocking}

We start by presenting less technical proofs of Theorem \ref{t:mainF2} and \ref{t:mainF3}. We begin by remarking that the proof of Theorem \ref{t:mainF2} along with the continuation criterion Thm \ref{t:mainF} require local well-posedness of the model equations:
\begin{proposition}[Local Well-Posedness]\label{prop:local}
	Let $c_\loc\geq 0$ and $c_\nl \geq 0$. Assume that $p:\mathbb{R}^+\to \mathbb{R}$ and $\mu:\mathbb{R}^+\to \mathbb{R}^+$ are $C^\infty$ functions away from zero. Assume $s\in (0,2)$ and let $\sigma := 2$ if $c_\loc >0$ and $\sigma := s$ if $c_\nl>0$ and $c_\loc=0$.
	Let $(u_0,\rho_0) \in H^{m+1-\sigma} \times H^m$, $m \geq 2$,  such that $r_0 := \min_{x\in \T}\rho_0 > 0$. 
	Suppose that for all $T>0$
	\begin{equation*}
	f\in L^2(0,T; H^{m-\sigma}(\T)).
	\end{equation*}
	Then, there exists a time $T_0>0$ depending only on $\| (\rho_0, u_0)\|_{H^m(\T)\times H^{m+1-\sigma}(\T)}$, { $r_0$ and $f$}, and a unique strong solution $(\rho, u)$ to \eqref{eq:mass}-\eqref{eq:initv} on $[0, T_0]$ with data $(\rho_0,u_0)$ such that 
	\begin{align}
u &\in C(0,T_0; H^{m+1-\sigma} (\T)) \cap L^2(0,T_0; H^m(\T)) \\ 
\rho &\in C(0,T_0;  H^m(\T))   \cap  L^2(0,T_0; H^{\sigma/2 + m}(\T))  
\end{align}
	and $\rho(x, t)>\frac{r_0}{2}$ for all $(x, t)\in \T\times [0,T_0]$. \end{proposition}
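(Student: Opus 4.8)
The plan is to construct the solution by a fixed-point iteration, exploiting the structural observation of Section~\ref{ss:HE} that the system is effectively doubly parabolic once the transport quantity $Q$ is introduced. On one hand, dividing \eqref{eq:mom} by $\rho$ gives a parabolic (order $\sigma$) equation for $u$ whose principal part is $\rho^{-1}(\mu(\rho)u_x)_x$ in the local case and, in the non-local case, $\rho^{-1}\cD_{\nl}=\cL^s(\rho u)-u\cL^s\rho\sim\rho\cL^s u$; on the other hand, substituting $u=X-Q$ into the continuity equation \eqref{eq:mass} turns it into a parabolic (again order $\sigma$) equation for $\rho$ with strictly positive diffusion on $\{\rho>0\}$, namely $\rho_t+(X\rho)_x=(\mu(\rho)\rho_x/\rho)_x$ locally and $\rho_t+(X\rho)_x=\rho\cL^s\rho+\text{l.o.t.}$ non-locally. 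This dual structure is precisely what produces the two dissipative gains recorded in the statement ($u\in L^2H^m$, $\rho\in L^2H^{m+\sigma/2}$) and underlies the regularity correspondence \eqref{e:ruorder}.

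Concretely, I would freeze the coefficients at step $k$ and define $(\rho^{k+1},u^{k+1})$ by solving two \emph{linear} problems: $\rho^{k+1}$ from the continuity equation with velocity $u^k$ (solved by characteristics, which immediately yields the pointwise lower bound $\rho^{k+1}\ge r_0\exp(-\int_0^t\|u^k_x\|_{L^\infty}\dt)$, hence $\rho^{k+1}>r_0/2$ on a short interval), and $u^{k+1}$ from the linear parabolic equation obtained by keeping $u^{k+1}$ as the unknown in $\cD$ and freezing the coefficient $\mu(\rho^{k+1})$ (resp. the weight $\rho^{k+1}$ in the non-local commutator). Solvability of the parabolic step is standard by Galerkin or semigroup methods; uniform ellipticity holds because $\rho^{k+1}\ge r_0/2$ keeps $\mu(\rho^{k+1})$ bounded below, so the possible degeneracy of the viscosity at vacuum is harmless here.

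The heart of the argument is a uniform a priori estimate that traps the iterates, on a short interval $[0,T_0]$ with $T_0$ depending only on the data, in a ball
\begin{equation*}
\|u^k\|_{L^\infty_tH^{m+1-\sigma}}+\|u^k\|_{L^2_tH^m}+\|\rho^k\|_{L^\infty_tH^m}+\|\rho^k\|_{L^2_tH^{m+\sigma/2}}\le M,\qquad \rho^k\ge r_0/2.
\end{equation*}
I would differentiate both equations to top order, test against the corresponding derivatives, and assemble a combined energy inequality in which the dissipation supplies the $L^2_t$-gains on the left. The density's gain $L^2_tH^{m+\sigma/2}$ is extracted by rewriting continuity in its reformulated parabolic form via the $Q$-substitution, and it is exactly this gain that allows the dangerous top-order coupling term $\int_\T\rho\,\partial_x^{m+1}\rho\,\partial_x^m u\dx$ (which appears when the mass equation is differentiated $m$ times) to be absorbed. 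This intertwined top-order closure is the main obstacle; in the non-local case it additionally requires fractional Leibniz and commutator estimates for $\cL^s$, both to control $\rho[\cL^s(\rho u)-u\cL^s\rho]$ as a perturbation of the sign-definite principal dissipation $\rho^2\cL^s u$ and to reconcile the low regularity of the forcing with the claimed velocity class.

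Once the uniform bounds are in hand, I would show the iteration is a contraction in a norm one order below the top (e.g. $u$ in $L^\infty_tH^{m-\sigma}$ and $\rho$ in $L^\infty_tH^{m-1}\cap L^2_tH^{m-1+\sigma/2}$): the differences solve the same linear equations with sources controlled by $M$, giving an estimate of the form $\le CT_0^{\theta}(\|\delta u^{k}\|+\|\delta\rho^{k}\|)$ with $CT_0^\theta<1$ for small $T_0$. The limit is a strong solution; the stated time-continuity into the top spaces follows from the equations together with interpolation between the $L^\infty_t$ and $L^2_t$ bounds and the Aubin--Lions lemma, the lower bound $\rho>r_0/2$ passes to the limit, and uniqueness follows by applying the same difference estimate to two solutions. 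Translating back through $u=X-Q$ recovers $(\rho,u)$ in the asserted spaces.
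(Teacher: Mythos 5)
A preliminary remark: the paper does not actually prove Proposition \ref{prop:local} --- it declares the result standard and defers to Appendix II of \cite{CDNP18} for the purely local case and Appendix A of \cite{KMT3} for the purely nonlocal case, with the mixed case left as an exercise. So your proposal must be judged on its own terms. Its broad template --- two parabolic mechanisms, a linearized iteration, uniform short-time bounds, contraction one order below top --- is indeed the standard route, and your structural observation that the continuity equation becomes parabolic of order $\sigma$ after the substitution $u = X - Q$ is the right key idea.

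The gap sits in the design of your iteration, precisely at what you yourself call the heart of the argument. You define $\rho^{k+1}$ by solving the continuity equation with the \emph{frozen} velocity $u^k$, by characteristics. That problem is pure transport: no diffusion acts on the unknown $\rho^{k+1}$, so the dissipative gain $\rho^{k+1} \in L^2_t H^{m+\sigma/2}$ claimed for the iterates is unobtainable, and even the persistence bound $\rho^{k+1} \in L^\infty_t H^m$ fails, because the top-order term $\int_\T \rho^{k+1}\, \partial_x^{m+1} u^k\, \partial_x^m \rho^{k+1} \dx$ in the $H^m$ energy estimate requires $u^k \in L^1_t H^{m+1}$, one full derivative more than the ball provides ($u^k \in L^2_t H^m \cap L^\infty_t H^{m+1-\sigma}$; recall that in this proposition the density is \emph{smoother} than the velocity, $\rho \in H^m$ versus $u \in H^{m-1}$ when $\sigma = 2$, which is exactly the regime where transport alone cannot propagate the density's regularity). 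Your appeal to ``rewriting continuity in its reformulated parabolic form via the $Q$-substitution'' does not rescue this: the substitution produces a diffusion term only when velocity and density are linked through the \emph{same} density, whereas for your lagged scheme the identity reads
\begin{equation*}
\partial_t \rho^{k+1} + \bigl(X^k \rho^{k+1}\bigr)_x = \bigl(Q(\rho^k)\, \rho^{k+1}\bigr)_x, \qquad X^k := u^k + Q(\rho^k),
\end{equation*}
whose right-hand side involves $Q(\rho^k)$, not $Q(\rho^{k+1})$, and is merely first order in the unknown --- not parabolic. The repair is to make the diffusion land on the current iterate: for instance, work in the variables $(\rho,X)$ and define $\rho^{k+1}$ as the solution of the \emph{linear parabolic} problem $\partial_t \rho^{k+1} + (X^k \rho^{k+1})_x = \bigl(\mu(\rho^k)\rho^{k+1}_x/\rho^k\bigr)_x$ (respectively, with right-hand side $\rho^k \cL^s \rho^{k+1} + Q(\rho^k)\rho^{k+1}_x$ in the nonlocal case), recover $u^{k+1}$ from the parabolic momentum equation with frozen coefficients as you describe, and set $X^{k+1} = u^{k+1} + Q(\rho^{k+1})$; the lower bound on $\rho^{k+1}$ then follows from the parabolic minimum principle rather than from characteristics. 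With that modification, your a priori estimates, the absorption of the top-order coupling into the two dissipations, and the contraction argument can be carried out along the lines you sketch.
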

	It should be remarked that this local well-posedness result covers all cases discussed in Theorems \ref{t:mainF} and \ref{t:mainF2} but it holds in far greater generality.  In particular, we do not require power-law forms for the pressure and viscosity constitutive laws.  
We do not produce a proof of Proposition \ref{prop:local} here, which is standard.  In fact, the purely local case was established in Appendix II of  \cite{CDNP18}.  On the other hand, the purely nonlocal case follows from the a general proof which works in higher dimensions and is provided in Appendix A of \cite{KMT3}, see also \cite{ST1} for singular kernel pressureless case. The mixed case is a  routine exercise, so is omitted.  With local well-posedness in hand, we proceed with the proof of global well-posedness.

\begin{proof}[Proof of Theorem \ref{t:mainF2}]
By  Prop. \ref{prop:local}, we have a local strong solution on some interval $[0,T_0]$. We aim to show that $T_0$ may be taken infinite by establishing a lower bound on the density and appealing to the no-vacuum continuation criteria established in Theorem \ref{t:mainF}.

Let us recall from the previous section that either in the local-only or in hybrid cases we establish control over the local term $\int_\T \rho |Q_\loc|^2 \dx$ the entropy $\cH_0$ and energy $\cE$. Both are bounded uniformly in time due to \eqref{e:H0bal} and \eqref{e:energy}, where we can estimate the force term by 
\begin{equation}\label{e:auxfur}
\begin{split}
\left| \int_\T f\rho u \dx\right| & \leq |f|_\infty \left( \cM + \int_\T \rho |u|^2 \dx \right) \leq C_1 + C_2 \cE, \\
\left| \int_\T f\rho X \dx \right| & \leq |f|_\infty \left( \cM + \int_\T \rho |X|^2 \dx \right) \leq C_1 + C_2 \cH_0.
\end{split}
\end{equation}
For $\mu(r)= r^{\a}$ with $\a <1/2$ this implies 
\[
\int_\T \rho |Q_\loc|^2 \dx = \int_\T \rho \left| \rho^{\a-2} \rho_x \right|^2 \dx = \int_\T |(\rho^{\a - \frac12})_x |^2 \dx = \| \rho^{\a - \frac12} \|_{\dot{H}^1} < \infty.
\]
To establish pointwise bound we simply recall that the density has a conserved finite mass $\|\rho(t)\|_1=\cM>0$. So, for each time $t$ there exist a point $x_0(t)$ such that $\rho(x_0(t),t)> \cM/2$. We find 
\be
\int_{x_0(t)}^x (\rho^{\a-1/2})_x  \dx= \rho^{\a-1/2}(x,t)- \rho^{\a-1/2}(x_0(t)) \geq  \rho^{\a-1/2}(x,t)- (\cM/2)^{\a-1/2}.
\ee
It follows that $ \rho^{\a-1/2}\in L^\infty_tL^\infty_x$ which implies  $1/\rho\in L^\infty_tL^\infty_x$ since $\a<1/2$.  This finishes the proof.
\end{proof}

\begin{proof}[Proof of Theorem \ref{t:mainF3}]

Due to the energy-entropy law elucidated in the previous section, \eqref{e:H0bal}, \eqref{e:energy}, and the specific form of enstrophy coming from the non-local dissipation, we obtain
\begin{equation}
\begin{split}
\ddt \cH_0 & \leq - c_1 \int_{\T\times \T} \phi_{s}(x-y) (\rho(y) - \rho(x))^2 \dx \dy  \\
\ddt \cE & \leq  - c_2 \int_{\T\times \T} \phi_s (x-y)|u(x)-u(y)|^2 \rho(x) \rho(y)\dy \dx.
\end{split}
\end{equation}
Note that under the linear pressure law \eqref{PEOSnl}, $\phi_s = \phi_{s,\rho}$. Moreover, by the  Gallilean invariance of the system and conservation of momentum, we may assume  that the total momentum remains $0$:
\[
\int_\T \rho u \dx = 0.
\]
Let us also assume $\cM = 1$. From the entropy dissipation we have 
\[
\int_{\T\times \T} \phi_{s}(x-y) (\rho(y) - \rho(x))^2 \dx \dy 
\geq c_0 |\rho - 1|_2^2 \geq c_0 \int_{\T} \rho \log \rho \dx \geq c_0 \int_{\T} \pi_0(\rho) \dx.
\]
This shows that $\int_0^\infty \int_{\T} \pi_0 \dx \dt <\infty$. Then 
\begin{equation}\label{e:envar}
\frac12 \int_{\T\times \T} |u(x)-u(y)|^2 \rho(x) \rho(y)\dy \dx = \cM \int_\T \rho |u|^2 \dx = c_1\cE - c_2 \int_\T \pi_0 \dx.
\end{equation}
Hence,
\[
\ddt \cE \leq - c_1 \cE + F(t),
\]
where $F \in L^1(\R_+)$.  By Duhamel, we obtain 
\[
\cE(t) \leq e^{-c_1 t} \cE_0 + \int_0^t e^{-c_1(t-s)} F(s) \ds.
\]
The asymptotics of convergence  $\cE \to 0$ is based on the convolution integral. We can estimate it as follows. Let us define a sequence of times by
\[
t_{m+1} = t_m + \frac{\l}{c_1} \ln m, \text{ for some } \l>1.
\]
Then $t_m \sim  \frac{\l}{c_1} \ln( m!)$, and by Stirling approximation, $t_m \sim m \ln m$.   Let $K = \int_0^\infty F(s) \ds$.  Then for every natural $n\in \N$ there exists an $m \in [n, (K+1)n]$ such that 
\[
\int_{t_{m-1}}^{t_m} F(s) \ds \leq \frac{1}{n}.
\]
Indeed, otherwise, $\int F\ds >K$.  At time $t_m$ we then have an estimate
\[
\cE(t_m) \leq \frac{1}{m^{cm}}  +  \int_0^{t_{m-1}} e^{-c_1(t_m-s)} F(s) \ds + \int_{t_{m-1}}^{t_m} e^{-c_1(t_m-s)} F(s) \ds \leq \frac{1}{m^{cm}}  +  \frac{K}{m^\l}  + \frac{1}{n}.
\]
But $1/n \sim 1/m$ which appears to be the leading order term.  Recalling that $t_m \sim m \ln m$ we conclude that $1/m \lesssim \ln t_m / t_m$. So, we have 
\[
\cE(t_m) \lesssim \frac{\ln t_m }{t_m}.
\]
For any other $t_m<t <t_{m+1}$, we have by monotonicity of the energy 
\[
\cE(t) \leq \cE(t_m) \lesssim  \frac{\ln t_m }{t_m} \sim  \frac{\ln t_{m+1} }{t_{m+1}} \leq  \frac{\ln t}{t} .
\]
This establishes the desired asymptotic. 

Finally, by the Csiszar-Kullback inequality, $\int \pi_0 \dx \geq |\rho-1|_1^2$, and \eqref{e:envar}, we obtain a flocking statement in the weighted $L^2$-sense:
\begin{equation}
\int_{\T\times \T} |u(x)-u(y)|^2 \rho(x) \rho(y)\dy \dx  + |\rho -1|_1^2 \lesssim \frac{\ln t}{t}.
\end{equation}
\end{proof}

\section{Continuation of non-vacuous solutions}
This section contains main technical ingredients of the hierarchy method, and provides the proof of Theorem \ref{t:mainF}.

Consider the evolution equations
\begin{equation}\label{e:F}
\begin{split}
\partial_t \rho + \partial_x (u \rho ) &= 0,  \\
\partial_t (\rho u) + \partial_x (\rho u^2) &=- \partial_x p(\rho) +c_{\nl}  \cD_{\nl}(u,\rho) + c_{\loc}  \cD_{\loc}(u,\rho)+\rho f 
\end{split}
\end{equation}
where, recall, the nonlocal and local dissipative operators are defined as
\[
 \cD_{\nl}(u,\rho):=\rho [ \cL^s(\rho u) - u\cL^s(\rho)], \qquad  \cD_{\loc}(u,\rho)=  (\mu(\rho) u_x)_x.\nonumber
\]
We will be considering $c_{\nl}\geq 0$ and $c_{\loc}\geq 0$.  The discussion of various cases requires us to break up our argument.  
In all cases, we consider the power-law for the pressure, i.e. $p(\rho)= c_p \rho^\gamma$ for some $\gamma > 0$ and  $ c_p>0$. In this case we have
 the explicit formula
 \be\label{formula:pi}
\pi_0(\rho)= c_p \rho \int_{\bar{\rho}}^\rho s^{\gamma-2}\rmd s =\begin{cases}
\frac{c_p }{\gamma-1}\rho^{\gamma}  &\quad \gamma >1,\ \bar{\rho}=0,\\
c_p \rho \log(\rho/\bar{\rho})  &\quad \gamma = 1, \ \bar \rho =\frac{1}{2\pi} \cM \\
\frac{c_p }{1-\g}(1 - \rho^{\gamma} )&\quad \gamma  < 1, \ \bar \rho =1.
\end{cases}
\ee 
Thus, if $\gamma>1$ then $\pi_0(\rho)\geq 0$ is non-negative pointwise. If $\gamma=1$, which is of particular relevance in the context of flocking \eqref{PEOSnl}, then upon spatial integration it is non-negative by the Csiszar-Kullback inequality, i.e. $\int \pi_0 \dx \geq |\rho-\bar{\rho}|_1^2$. This non-negativity will be repeatedly used for extracting information from the a priori estimates arising from the Bresch-Desjardins entropy balance. When $\g<1$, we simply note that $\int \pi_0 \ds$ is bounded by the mass.

Assume that  $(u,\rho)$ is a smooth solution on the time interval $[0, T^*)$ such that  for any $m \geq 2$
\begin{align}
u &\in L^\infty(0,T; H^{m+1-\sigma} ) \cap L^2(0,T; H^m) \\ 
\rho &\in L^\infty(0,T;  H^m)   \cap  L^2(0,T; H^{\sigma/2 + m}) 
\end{align}
for any $T<T^*$,
where we have introduced
\be
\sigma = \begin{cases}2 & c_{\nl}\geq 0, c_\loc>0\\ s & c_{\nl}>0, c_\loc=0 \end{cases}.
\ee
Assume also that no vacuum has appeared
\be\label{e:rmin}
\cl :=\inf_{t\in [0, T^*)}\min_{x\in\T}\rho(x, t)>0.
\ee

\subsection{\bf Mass, Energy and $\cH_0$-entropy}

First, from the continuity equation \eqref{eq:mass} total mass is conserved
\be\label{massConsbnd}
\cM = \Vert \rho(\cdot, t)\Vert_{L^1(\T)}=\Vert \rho_0\Vert_{L^1(\T)}.
\ee
Recall next the basic energy balance \eqref{e:energy}, which in this case reads
\begin{equation} \label{energyBalance}
\ddt \cE = - c_{\nl}\int_{\T\times \T}  \frac{1}{2}\phi_s (x-y)|u(x)-u(y)|^2 \rho(x) \rho(y)\rmd y \rmd x- c_\loc \int_{\T} \mu(\rho) |u_x|^2\rmd x+\int_\T f\rho u\rmd x.
\end{equation}
holds for any $t\in [0, T^*)$. In the view of the estimate \eqref{e:auxfur} this establishes uniform control in the energy space $u\in L^\infty L^2 \cap L^2 H^{\sigma/2}$ on the given time interval.  

The Bresch-Desjardins entropy \eqref{e:H0}  satisfies the balance
 \eqref{e:H0bal}, \eqref{e:dissFlock},
\begin{equation}
\ddt \cH_0 =  -c_{\nl} \int_{\T^2} \frac{1}{2} \phi_{s,\rho}(x,y) (\rho(y) - \rho(x))^2 \dx \dy  -c_\loc \int_{\T^2} \frac{\mu(\rho)p'(\rho)}{\rho} |\rho_x|^2 \dx  + \int_\T \rho f X \dx.
\end{equation}
holds for any $t\in [0, T^*)$.   Again, using  \eqref{e:auxfur}, we find that $\cH_0$ remains bounded.  We now derive conclusions from these balance in two cases:
\\

\noindent \textbf{Purely non-local dissipation:}
Given the finite energy and controllability of $\int \pi_0 \dx$ discussed above, we obtain an $L^2$ bound on $Q = \p_x^{-1} \cL^s \rho$, hence, $\rho \in L^\infty H^{s-1}$. Further condition coming from the dissipation term $\rho \in L^2 H^{s/2}$ follows from the lower bound on the density and $p'$ under the assumptions of Theorem~\ref{t:mainF}.
Provided $s>3/2$, we have the embedding $L^1 \cap \dot{H}^{s-1} = H^{s-1} \ss  L^\infty$, and so, the density is uniformly bounded: $\rho \in L^\infty(0,T; L^\infty)$.
\\

\noindent \textbf{Purely local or  mixed  dissipation:}
When the local or both components are active, the uniform bound on the density comes from several sources. Indeed, in light the non-negativity of the mixed term \eqref{poscross} discussed in the introduction, we have control over the two terms $\int \rho Q_{\nl}^2\rmd x$ and $\int\rho Q_\loc^2\rmd x$  separately from the Bresch-Desjardins entropy.  Boundedness of the density is then established in the following parameter regimes.

\begin{enumerate}
\item  $c_{\nl}>0$ and $c_{\loc}\geq 0$ with $\gamma > 0$, $s>3/2$ and $\alpha\geq 0$.  Then $\rho \in L^\infty(0,T; L^\infty)$ by the non-local argument above. 
\item 
$c_{\nl}\geq 0$  and $c_{\loc}> 0$ with $\gamma > 0$,  $s\in (0,2)$ and $\alpha>1/2$. Following the proof of 
Theorem \ref{t:mainF2}, we find 
\[
\int_\T \rho |Q_\loc|^2 \dx = \| \rho^{\a - \frac12} \|_{\dot{H}^1} < \infty.
\]
Boundedness of the density follows, as in the Theorem, from the conserved finite mass.
\item  $c_{\nl}\geq 0$  and $c_{\loc}> 0$ with  $\gamma>1$, $s\in (0,2)$ and $\alpha >0$. 
 This follows from the bound
\be
\int \left|\partial_x\left( \rho^{\frac{\gamma-1}{2}}\right)\right| \rmd x\leq \|\rho\|_{L^\gamma}^{\gamma/2}\left(\int \rho^{-3} |\rho_x|^2 \rmd x\right)^{1/2} \leq c\|\rho\|_{L^\gamma}^{\gamma/2}\left(\int \rho Q_\loc^2\rmd x\right)^{1/2}<\infty
\ee
with a constant $c:= c(\cl)$. Thus, since $\rho\in L^\gamma$ it follows that $\rho^{\frac{\gamma-1}{2}}\in L^1$ so that combined with the above we have $\rho^{\frac{\gamma-1}{2}}\in L^\infty(0,T;W^{1,1}(\mathbb{T}))$.  Boundedness follows from Sobolev embedding. 
\end{enumerate}
Once an upper bound on the density is established in any of the above cases, then the local part of the BD entropy gives the control $\rho \in L^\infty(0,T; H^1)$.  This control trumps what can be obtained from the dissipation of the BD entropy.

All the a priori bounds we have obtained so for in either of the cases can be summarized as follows:
\begin{align}
u &\in L^\infty(0,T; L^2) \cap L^2(0,T; H^{\sigma /2}) \label{e:apru1} \\ 
\rho &\in L^\infty(0,T; L^\infty)  \cap L^\infty(0,T;  H^{\sigma-1})   \cap  L^2(0,T; H^{\sigma /2})  \label{e:aprrho1}
\end{align}
with norms depending on $\cl$.
\\

\subsubsection{\bf $\cH_1$-entropy and its consequences} We now work out a detailed balance relation for the $\cH_1$-entropy. Let us recall the definitions
\[
\begin{split}
\cH_1 &=  \frac12\int_\T \rho Y^2 \dx + \int_\T 	\pi_1(\rho,\rho_x) \dx\\
Y& = \frac{1}{\rho}( u_x + Q_x), \quad  \pi_1(\rho,\rho_x) = \frac12 h'(\rho) \frac{ \rho_x^2}{\rho^2}.
\end{split}
\]
According to \eqref{e:balX}, \eqref{e:H1} we have
\begin{equation}\label{e:H1bal}
\ddt \cH_1 =  - \int_\T  h_{xx}(\rho) Y \dx + \ddt   \int_\T 	\pi_1(\rho,\rho_x) \dx + \int_\T  f_x Y \dx.
\end{equation}
Looking ahead at the argument below, we remark that expansion of the pressure potential term $\pi_1$ on the right hand side of this enstrophy budget  will produce a dissipation term:
\[
- c_{\nl} \| \rho\|_{H^{\frac{s}{2} + 1}}^2 - c_\loc \| \rho\|_{H^{2}}^2.
\]
 We will be using it repeatedly to absorb various  residual terms by interpolation using uniform bounds on the density from below, above, and in $H^{\sigma-1}$ by \eqref{e:aprrho1}. 

So, first, let us estimate the forcing 
\[
\left|   \int_\T  f_x Y \dx  \right| \leq c(|f|_{C^1}) + \cH_1.
\]
Next, we expand the $Y$-term:
\begin{equation}\label{e:hxxY}
- \int_\T  h_{xx}(\rho) Y \dx = - \int_\T  h'(\rho) \rho_{xx} Y \dx- \int_\T  h'' (\rho)\rho_x^2  Y \dx.
\end{equation}
The second term can be estimated by 
\begin{equation}\label{e:Yrho2a}
\left|   \int_\T  h'' (\rho)\rho_x^2  Y \dx  \right| \leq    \cH_1^\frac12 |\rho_x|_4^2   ,
\end{equation}
and using that $|\rho_x|_4 \leq \|\rho\|_{\dot{H}^{\sigma/2+1}}^{\frac{9-4\sigma}{2(4-\sigma)}} $, and $\frac{9-4\sigma}{4-\sigma} \leq 1$ as long as $\sigma \geq \frac53$, we obtain
\begin{equation}\label{e:Yrho2b}
\left|   \int_\T  h'' (\rho)\rho_x^2  Y \dx  \right| \leq   c(\e) \cH_1 + \e \| \rho\|_{H^{\frac{\sigma}{2} + 1}}^2.
\end{equation}
In view of the remark above we can absorb the term $ \e \| \rho\|_{H^{\frac{\sigma}{2} + 1}}^2$ into the upcoming dissipative contribution from the pressure potential.  The remaining residual term $- \int_\T  h'(\rho) \rho_{xx} Y \dx$ will in fact be completely canceled out by another contribution of the pressure potential on which we focus for the remainder of the proof. First, we introduce  a couple of shortcuts that greatly simplify the exposition. 

 \begin{itemize}
 \item  Throughout this proof, we will routinely drop integral signs for brevity.  All equalities are intended to hold only upon spatially integrating over $\mathbb{T}$.
 \item We denote $g(r) = \frac12 h'(r)/r^2$ so that $\pi_1 = g(\rho) \rho_x^2$. 
 \end{itemize}
Let us compute the potential now
\begin{align} \nonumber
\ddt \pi_1 & = - g'  \rho_x^2 (u\rho)_x - 2 g \rho_x (u\rho)_{xx} = g' \rho_x^2 (u\rho)_x  + 2 g \rho_{xx} (u\rho)_x \\
& = 2g \rho \rho_{xx} u_x+ \rho g' \rho_x^2 u_x + g' \rho_x^3 u + 2 g \rho_{xx} \rho_x u \nonumber
\end{align}
{integrating by parts in the last term}
\begin{align} \nonumber
& = 2g \rho \rho_{xx} u_x+ \rho g' \rho_x^2 u_x + g' \rho_x^3 u - g' \rho_x^3 u - g \rho_x^2 u_x\\
& = 2g \rho \rho_{xx} u_x + \rho g' \rho_x^2 u_x -  g \rho_x^2 u_x. \label{e:lastpi1}
\end{align}
The last two terms are of the  form $q(\rho) \rho_x^2 u_x$, where $q$ is a smooth function on $\R^+$. We can estimate any such term by replacing 
\be
u_x = \rho Y - c_{\nl}\cL^s \rho - c_\loc( \mu(\rho)\rho_x/\rho^2 )_x.
\ee
 The residual term $q(\rho)\rho \rho_x^2 Y$ enjoys the same estimate  as in \eqref{e:Yrho2b}.   The local term breaks up into two: $q(\rho) \rho_x^2\rho_{xx}$ and $q(\rho) \rho_x^4$ for smooth $q$ (which we redefine line by line). Interpolating between $H^2$ and $H^1$ we obtain
 \be\label{locterm1}
 \left| \int_\T q(\rho)  \rho_x^2\rho_{xx} \dx \right| \leq  |\rho_x|_\infty \| \rho\|_{{H}^{1}} \| \rho\|_{{H}^{2}}\leq    \e \| \rho\|_{\dot{H}^{2}}^2 + c(\e),
 \ee
 and using  $|\rho_x|_4 \leq \|\rho\|_{\dot{H}^{2}}^{\frac{1}{4}} $ we have
 \be\label{locterm2}
 \left| \int_\T q(\rho)  \rho_x^4 \dx \right| \leq  |\rho_x|_4^4\leq \|\rho\|_{\dot{H}^{2}}  \leq    \e \| \rho\|_{\dot{H}^{2}}^2 + c(\e).
 \ee
To estimate the non-local part $q(\rho) \rho_x^2 \cL^s \rho$, we symmetrize in the integral representation of $\cL^s$ and estimate according to the following
\begin{equation}\label{e:aux10}
\begin{split}
\left| \int_\T q(\rho) \rho_x^2 \cL^s \rho \dx \right|  \leq  |\rho_x|_\infty^2 \| \rho \|_{H^{s/2}}^2 + |\rho_x|_\infty \| \rho \|_{H^{s/2+1}} \| \rho \|_{H^{s/2}}.
\end{split}
\end{equation}
By the Gagliardo-Nirenberg inequality, 
\[
|\rho_x|_\infty \leq  \| \rho\|_{\dot{H}^{s/2+1}}^{\frac{5-2s}{4-s}}  \| \rho\|_{\dot{H}^{s-1}}^{\frac{s-1}{4-s}}, \qquad  \| \rho\|_{\dot{H}^{s/2}} \leq \| \rho\|_{\dot{H}^{s/2+1}}^{\frac{2-s}{4-s}} \| \rho\|_{\dot{H}^{s-1}}^{\frac{2}{4-s}}.
\]
Thus,
\[
\left| \int_\T q(\rho) \rho_x^2 \cL^s \rho \dx \right| \leq  \| \rho\|_{\dot{H}^{s/2+1}}^{\frac{14-6s}{4-s}}  + \| \rho\|_{\dot{H}^{s/2+1}}^{\frac{11-4s}{4-s}} \leq \e \| \rho\|_{\dot{H}^{s/2+1}}^2 + c(\e),
\]
due to both powers being less than or equal $2$ as long as $s >\frac32$. 

Finally, for the first term on the right hand side of \eqref{e:lastpi1} we  have
\[
2g \rho \rho_{xx} u_x = 2g \rho^2 \rho_{xx} Y - 2c_{\nl}g \rho \rho_{xx} \cL^s \rho - 2c_\loc g \rho \rho_{xx}( \mu(\rho)\rho_x/\rho^2 )_x.
\]
Notice that $2g \rho^2 \rho_{xx} Y = h'(\rho) \rho_{xx} Y$, which cancels with the first term on the right hand side of \eqref{e:hxxY}. The  main contribution of the last two terms is dissipation.  Indeed, omitting constants and integrating by parts,
\[
- g \rho \rho_{xx} \cL^s \rho =  g' \rho_x^2 \cL^s \rho + g \rho_x \cL^s \rho_x.
\]
The first one we already estimated. The second, after symmetrization, is bounded by 
\[
 g \rho_x \cL^s \rho_x \leq - c_0 \| \rho\|_{\dot{H}^{s/2+1}}^2 + |\rho_x|_\infty \| \rho \|_{H^{s/2+1}} \| \rho \|_{H^{s/2}},
\]
with the latter already being treated in \eqref{e:aux10}. The last local term splits into
\be
 - g \rho \rho_{xx}(\mu(\rho)\rho_x/\rho^2 )_x =  -q_1 |\rho_{xx}|^2 + q_2 |\rho_x|^2 \rho_{xx}, \quad q_1>0.
\ee
With the use of \eqref{locterm1}, we estimate it by 
\[
 - g \rho \rho_{xx}(\mu(\rho)\rho_x/\rho^2 )_x \leq - c  \| \rho \|_{H^{2}}^2 + c(\e). 
\] 
Collecting the estimates we obtain
\begin{equation}\label{e:H1bal2}
\ddt \cH_1 \leq  c_1 \cH_1 + c_2  - c_3 \| \rho\|_{\dot{H}^{\sigma/2+1}}^2.
\end{equation}
This implies  a uniform bound on $\cH_1$ on the time interval at question along with integrability of $  \| \rho\|_{\dot{H}^{\sigma/2+1}}^2$. As a consequence of the  positivity of $\pi_1$, we obtain $\rho \in L^\infty H^1$, and 
\be
\rho Y= u_x + c_{\nl} \cL^s\rho +  c_\loc( \mu(\rho)\rho_x/\rho^2 )_x\in L^\infty L^2.
\ee 
In the purely non-local case ($c_\loc=0$), if we apply $1-s$ derivatives on this expression we still obtain a function in $L^2$, yet $\rho_x \in L^2$ by the previous. This places  $u$ into $H^{2-s}$ uniformly. However the $L^2$-in-time class improves only to $H^1$.  In the mixed or local cases, no further information is extracted from this computation. We obtain another series of a priori bounds:
\begin{align}
u &\in L^\infty(0,T; H^{2-\sigma} ) \cap L^2(0,T; H^{1}) \label{e:apru2} \\ 
\rho &\in L^\infty(0,T;  H^{1})   \cap  L^2(0,T; H^{\sigma/2 + 1})  \label{e:aprrho2}
\end{align}
where we identify $L^2= H^{0}$.  In addition, we record
\be\label{Xbdh1}
X \in L^\infty(0,T; H^1).
\ee
Notice that in the local/mixed case, the only improvement coming from the $\cH_1$-entropy at the level of the $L^2$-in-time for $\rho$, as well as the boundedness of the $X$ quantity \eqref{Xbdh1}.  The latter point is important in continuing our procedure.
\\

\subsubsection{\bf $\cH_2$-entropy and its consequences} 
Let us denote $Z = \rho^{-1} Y_x$. Then $Z$ satisfies
\begin{equation}
\dD_t Z = - \rho^{-1} (\rho^{-1} h_{xx}(\rho))_x + \rho^{-1}(\rho^{-1} f_x)_x.
\end{equation}
We thus define our next entropy by
\begin{equation}\label{e:H2}
\begin{split}
\cH_2 &=  \frac12\int_\T \rho Z^2 \dx + \int_\T 	\pi_2 \dx \\
\pi_2 & = \frac12 h'(\rho) \frac{\rho_{xx}^2}{\rho^4}.
\end{split}
\end{equation}
Note that 
\[
\ddt \frac12\int_\T \rho Z^2 \dx = - \int_\T ( \rho^{-1} h_{xx})_x Z \dx + \int_\T (\rho^{-1} f_x)_x Z \dx.
\]
The main term we need to eliminate is in fact $ \rho^{-1} h_{xxx} Z$.  The remaining ones coming from $h_{xx}$ and $\rho^{-1}$ end up being bounded by $|\rho_x|^2 + |\rho_x||\rho_{xx}|$. This can be estimated by 
\begin{equation}
\int_\T | Z|  ( |\rho_x|^2 + |\rho_x||\rho_{xx}|  )\dx \leq  \cH_2^{1/2}( |\rho_x|_\infty |\rho_{xx}|_2 + |\rho_x|^2_4).
\end{equation}
Keeping in mind that at this stage the dissipation term will be in $H^{2+\sigma/2}$ (see discussion at the start of section of $\cH_1$-entropy).  Moreover, the density is uniform in $H^1$, we obtain, by interpolation between $H^1$ and $H^{2+\sigma/2}$,
\begin{equation}\label{e:rrr}
|\rho_x|_\infty |\rho_{xx}|_2 + |\rho_x|^2_4 \lesssim \|\rho\|_{H^{2+\sigma/2}}^{\frac{3}{\sigma+2}} + \|\rho\|_{H^{2+\sigma/2}}^{\frac{1}{\sigma+2}}.
\end{equation}
Here, obviously,  $\frac{3}{\sigma+2} \leq 1$ if $\sigma=2$ or $\sigma=s$ in our range of $s$. Hence the term can be hidden in the dissipation,
\begin{equation}\label{e:Zrho}
\int_\T |Z|  ( |\rho_x|^2 + |\rho_x||\rho_{xx}|  )\dx \leq  c(\e) \cH_2 + \e  \|\rho\|_{H^{2+\sigma/2}}^2.
\end{equation}
In the main term $ \rho^{-1} h_{xxx} Z$ the worst part comes when all derivatives fall on the density: 
\[
h_{xxx} = h' \rho_{xxx} + 3 h'' \rho_{xx} \rho_x + h''' \rho_x^3.
\]
Indeed, the term in the middle repeats \eqref{e:Zrho}, while  the last one admits 
\begin{equation}\label{e:Zrho3}
\int_\T |Z|  |\rho_x|^3 \dx \leq \cH_2^{1/2}|\rho_x|^3_6 \leq \cH_2^{1/2} \|\rho\|_{H^{2+\sigma/2}}^{\frac{2}{\sigma+2}} \leq c(\e) \cH_2 + \e  \|\rho\|_{H^{2+\sigma/2}}^2.
\end{equation}
Here and throughout we repeatedly use  interpolation between $H^1$ and $H^{2+\sigma/2}$ for the density terms. Thus the worst part of the original term $ - \int_\T ( \rho^{-1} h_{xx})_x Z \dx$ gets reduced to just 
\begin{equation}\label{e:worst2}
- \int_\T \rho^{-1} h'(\rho) \rho_{xxx} Z \dx.
\end{equation}
As on the $\cH_1$-step  we expect this term to be canceled by a contribution coming from the pressure potential $\pi_2$. Let us examine it next. 

It will be convenient to denote $g(r) = \frac12 h'(r) / r^4$, and simply write $\pi_2 =  g(\rho) \rho_{xx}^2$.  Integrating by parts  we obtain 
\begin{equation}\label{e:derpi2}
\ddt \int_\T \pi_2 \dx = g \rho \rho_{xxx}u_{xx} + g' \rho \rho_{xx} \rho_x u_{xx} - 2 g \rho_{xx} \rho_x u_{xx} - \frac12 g' \rho \rho_{xx}^2 u_x - \frac52 g \rho_{xx}^2 u_x.
\end{equation}
In the course of subsequent computations we will encounter a number of similar terms. They can be sorted into two groups -- local ones involving  $u$ and $X$, and non-local involving operator $\cL^s$.  All terms come with a prefactor of the form $q(\rho)$ for smooth $q$ which can be ignored. The local ones are
\begin{equation}\label{e:listloc}
\rho_{xx}^2 u_x, \quad  \rho_{xx} \rho_x^2 u_x, \quad  \rho_{xx} \rho_x^2 X_x, \quad  \rho_{xx}^2 X_x, 
\end{equation}
\begin{equation}
 \rho_x^2\rho_{xx}^2, \quad  \rho_{xx}\rho_x^4, \quad \rho_{xx}^3
\end{equation}
the non-locals are  $q(\rho)$-multiples of 
\begin{equation}\label{e:listnonloc}
\rho_{xx}^2 \cL^s \rho,  \quad \rho_{xx} \rho_x^2 \cL^s \rho,  \quad \rho_{xx} \rho_x \cL^s \rho_x.
\end{equation}
Substituting $u_x= \rho Y - c_{\nl}\cL^s \rho - c_\loc( \mu(\rho)\rho_x/\rho^2 )_x$ in the first two local terms, we reduce it to the next locals and non-local ones. We now use interpolation and boundedness of $X_x$ in $L^2$ uniformly, to estimate the local terms as follows
\begin{align}
\int_\T	|\rho_{xx}^2 X_x| \dx & \leq |X_x|_2 |\rho_{xx}|_4^2 \leq c_1 \|\rho\|_{H^{2+\sigma/2}}^{\frac{10}{2\sigma + 4}} \leq c_2 + \e \|\rho\|_{H^{2+\sigma/2}}^2 \\
\int_\T| \rho_{xx} \rho_x^2 X_x|\dx & \leq |X_x|_2 |\rho_{xx}|_2 |\rho_x|_\infty^2 \leq c_3 \|\rho\|_{H^{2+\sigma/2}}^{\frac{2}{\sigma + 2}} \|\rho\|_{H^{2+\sigma/2}}^{\frac{2}{\sigma + 2}} = c_3 \|\rho\|_{H^{2+\sigma/2}}^{\frac{4}{\sigma + 2}} \leq c_4 + \e \|\rho\|_{H^{2+\sigma/2}}^2.
\end{align}
The other local terms are
\[
\begin{split}
\left| \int_\T  \rho_{xx}^3 \dx \right|  & \leq    |\rho_{xx}|_\infty \| \rho \|_{\dot{H}^{3}} \| \rho\|_{\dot{H}^{1}} + |\rho_{xx}|_\infty^2 \| \rho\|_{\dot{H}^{1}}^2  \leq c_5 \| \rho \|_{\dot{H}^{3}}^{\frac{7}{4}} + c_6 \| \rho \|_{\dot{H}^{3}}^{\frac{3}{2}} \leq c_7 + \e \|\rho\|_{H^{3}}^2.
\end{split}
\]
which follows after noting that $\| \rho\|_{\dot{H}^{1}}$ is uniformly bounded and $|\rho_{xx}|_\infty \leq c \| \rho \|_{\dot{H}^{3}}^{\frac{3}{4}}$. In the next two terms we simply use H\"older inequality:
\[
\left| \int_\T  \rho_{xx}^2 \rho_x^2  \dx \right|   \leq |\rho_x|_\infty^2 \| \rho \|_{H^{2}}^2 \leq \| \rho \|_{\dot{H}^{3}}^{\frac{1}{2}}  \| \rho \|_{\dot{H}^{3}}=  \| \rho \|_{\dot{H}^{3}}^{\frac{3}{2}} \leq c_7 + \e \|\rho\|_{H^{3}}^2.
\]
\[
\left| \int_\T  \rho_{xx}\rho_x^4 \dx \right|   \leq |\rho_{xx}|_2 |\rho_x|_\infty \| \rho \|_{H^{3}} \leq \| \rho \|_{\dot{H}^{3}}^{\frac{1}{2}}  \| \rho \|_{\dot{H}^{3}}^{\frac{1}{4}}  \| \rho \|_{\dot{H}^{3}} =  \| \rho \|_{\dot{H}^{3}}^{\frac{7}{4}} \leq c_7 + \e \|\rho\|_{H^{3}}^2.
\]

Let us turn to non-local ones. In the first one we symmetrize in the operator  $\cL^s$, which produces increments of the other factors that come with it. Thus, we have 
\[
\begin{split}
\left| \int_\T q(\rho) \rho_{xx}^2 \cL^s \rho \dx \right|  & \leq    |\rho_{xx}|_\infty \| \rho \|_{\dot{H}^{s/2+2}} \| \rho\|_{\dot{H}^{s/2}} + |\rho_{xx}|_\infty^2 \| \rho\|_{\dot{H}^{s/2}}^2 \\
\intertext{and noting that $\| \rho\|_{\dot{H}^{s/2}}$ is uniformly bounded and $|\rho_{xx}|_\infty \leq c \| \rho \|_{\dot{H}^{s/2+2}}^{\frac{3}{s+2}}$,}
& \leq c_5 \| \rho \|_{\dot{H}^{s/2+2}}^{1+ \frac{3}{s+2}} + c_6 \| \rho \|_{\dot{H}^{s/2+2}}^{\frac{6}{s+2}} \leq c_7 + \e \|\rho\|_{H^{2+\sigma/2}}^2.
\end{split}
\]
In the next we simply use H\"older inequality:
\[
\begin{split}
\left| \int_\T q(\rho) \rho_{xx} \rho_x^2 \cL^s \rho \dx \right|  &  \leq |\rho_{xx}|_2 |\rho_x|_\infty^2 \| \rho \|_{H^{s}} \leq \| \rho \|_{\dot{H}^{s/2+2}}^{\frac{2}{s+2}}  \| \rho \|_{\dot{H}^{s/2+2}}^{\frac{2}{s+2}}  \| \rho \|_{\dot{H}^{s/2+2}}^{\frac{2s-2}{s+2}} \\
&=  \| \rho \|_{\dot{H}^{s/2+2}}^{\frac{2s+2}{s+2}} \leq c_7 + \e \|\rho\|_{H^{2+\sigma/2}}^2.
\end{split}
\]
The same strategy applies for the last one:
\[
\begin{split}
\left| \int_\T q(\rho) \rho_{xx} \rho_x \cL^s \rho_x \dx \right|  & \leq |\rho_{xx}|_2 |\rho_x|_\infty \| \rho \|_{H^{s+1}} \leq \| \rho \|_{\dot{H}^{s/2+2}}^{\frac{2}{s+2}}  \| \rho \|_{\dot{H}^{s/2+2}}^{\frac{1}{s+2}}  \| \rho \|_{\dot{H}^{s/2+2}}^{\frac{2s}{s+2}} \\
&=  \| \rho \|_{\dot{H}^{s/2+2}}^{\frac{2s+3}{s+2}} \leq c_7 + \e \|\rho\|_{H^{2+\sigma/2}}^2.
\end{split}
\]
Let us now get back to \eqref{e:derpi2}. The last two terms are obviously of same local type. The two terms in the middle are of the same type too. There we 
replace $u_{xx}$ with
\begin{equation}\label{e:uxx}
u_{xx} = \rho^2 Z + \rho^{ -1} \rho_x u_x + c_{\loc}(\rho^{-1} \rho_x \cL^s \rho - \cL^s \rho_x) +c_{\nl}(q_1 \rho_x^3 + q_2\rho_x\rho_{xx} -q_3 \rho_{xxx})
\end{equation}
where $q$'s are some functions of $\rho$, and most importantly,  $q_3=\mu(\rho)/\rho^2 > 0$.
This results into $ \rho_{xx} \rho_x Z$, already estimated in \eqref{e:Zrho}; and the series of terms
\[
\rho_{xx} \rho_x^2 u_x,\  \rho_{xx} \rho_x^2 \cL^s\rho,\  \rho_{xx} \rho_x \cL^s\rho_x, \  \rho_x^2\rho_{xx}^2,\    \rho_{xx}\rho_x^4, \  \rho_{xx}^3,
\]
all of which have been protocoled above. Finally, in the first and main term in  \eqref{e:derpi2} we use \eqref{e:uxx} to obtain
\[
g \rho \rho_{xxx}u_{xx} = \rho^{-1} h'(\rho) \rho_{xxx} Z  +   g \rho_{xxx}(\rho_x u_x +  \rho_x \cL^s \rho - \rho \cL^s \rho_x) +  g \rho_{xxx}(  q_1 \rho_x^3 +q_2 \rho_x\rho_{xx} - q_3 \rho_{xxx}),
\]
The first term is precisely the one that cancels with \eqref{e:worst2}.  In the rest, we integrate by parts to relieve one derivative from  $\rho_{xxx}$ in all but the final term above, which is strictly dissipative.  In the local terms, integrate by parts to relieve one derivative from  $\rho_{xxx}$ and letting $g_i(r):= g(r) q_i(r)$, we obtain
\begin{align*}
-g \rho_{xxx}\rho_x u_x &= g' \rho_{xx} \rho_x^2 u_x + g \rho_{xx}^2 u_x + g \rho_{xx}\rho_x u_{xx}\\
-g_1 \rho_{xxx} \rho_x^3 &=g_1' \rho_{xx} \rho_x^4+ 3g_1 \rho_{xx}^2 \rho_x^2,\\
-g_2\rho_{xxx} \rho_x\rho_{xx} &= \frac{1}{2} g_2\rho_{xx}^3 + \frac{1}{2}  g_2' \rho_x^2\rho_{xx}^2
\end{align*}
upon integration. 
We obtain (up to the opposite sign)
\begin{multline}\label{e:long}
g' \rho_{xx} \rho_x^2 u_x + g \rho_{xx}^2 u_x + g \rho_{xx}\rho_x u_{xx} + g' \rho_{xx} \rho_x^2 \cL^s \rho + g \rho_{xx}^2 \cL^s \rho + g \rho_{xx} \rho_x \cL^s \rho_x \\
- g' \rho_{xx} \rho_x \rho \cL^s \rho_x - g \rho_{xx} \rho_x  \cL^s \rho_x - g \rho_{xx} \rho \cL^s \rho_{xx}\\
+ g \rho_{xx}\rho_x u_{xx}+ g_1' \rho_{xx} \rho_x^4+ \left(3g_1+ \frac{1}{2}  g_2' \right) \rho_{xx}^2 \rho_x^2+  \frac{1}{2} g_2\rho_{xx}^3 - g_3 \rho_{xxx}^2.
\end{multline}
All of the terms have been included in the lists, except $g \rho_{xx} \rho \cL^s \rho_{xx}$ and $g q_3 \rho_{xxx}^2$ which are dissipative. This is obvious for the local term:
\begin{equation}
\begin{split}
\int_\T  g_3 (\rho) \rho_{xxx}^2  \dx & \geq  c_1 \|\rho\|_{\dot{H}^{3}}^2.
\end{split}
\end{equation}
 As to the non-local term, we perform the same argument -- by symmetrization, and noting that variation of $\rho g(\rho)$ results in a variation of $\rho$, we find
\begin{equation}
\begin{split}
\int_\T \rho g(\rho) \rho_{xx} \cL^s \rho_{xx} \dx & \leq  - c_1 \|\rho\|_{\dot{H}^{2+ s/2}}^2 + |\rho_{xx}|_\infty \|\rho\|_{\dot{H}^{s/2}}  \|\rho\|_{\dot{H}^{2+ s/2}} \\
\intertext{since $H^{s/2}$ norm is uniformly bounded,}
& \leq - c_1 \|\rho\|_{\dot{H}^{2+ s/2}}^2 +  \|\rho\|_{\dot{H}^{2+ s/2}}^{\frac{3}{2+\a} + 1} \leq - c_8\|\rho\|_{H^{2+s/2}}^2  + c_9.
\end{split}
\end{equation}

Collecting the obtained estimates we arrive at 
\[
\ddt \cH_2 \leq - c' \|\rho\|_{H^{2+\sigma/2}}^2  + c'' \cH_2 + c'''.
\]
This proves uniform boundedness of $\cH_2$ and  $\rho\in L^2 H^{2+\sigma/2}$.  Also from the corrector we obtain $\rho \in L^\infty H^2$. Since $Z\in L^2$ this translates into $u_{xx} + \p_x \cL^s \rho+ \rho_{xxx} \in L^2$ uniformly. This puts $u\in L^\infty H^{3-\sigma} \cap L^2 H^2$.  Collectively we obtain
\begin{align}
u &\in L^\infty(0,T; H^{3-\sigma} ) \cap L^2(0,T; H^2), \label{e:apru3} \\ 
\rho &\in L^\infty(0,T;  H^2)   \cap  L^2(0,T; H^{\sigma/2 + 2}).  \label{e:aprrho3}
\end{align}
\\

\vspace{-6mm}
\subsubsection{\bf $\cH_n$-entropy: closing the argument}

Let us note that in the previous calculations the requirements on $s$ relax to just $s>1$. It is now clear that can construct a hierarchy of higher order entropies in the form
\begin{equation}\label{e:Hn}
\begin{split}
\cH_n &=  \frac12\int_\T \rho Z_n^2 \dx + \int_\T 	\pi_n \dx \\
\pi_n  &= \frac12 h'(\rho) \frac{(\p^n_x\rho)^2}{\rho^{2n}},
\end{split}
\end{equation}
where at the core of $Z_n$ is the term $\p_x^n u + c_{\nl}\p_x^{n-1} \cL^s \rho+ c_\loc q(\rho) \p_x^{n+1}  \rho$.  The argument above extends easily with identical steps to this general case. As a result we obtain uniform boundedness of the $n$-th entropy and corresponding $L^2$-integrability of the enstrophy. This puts our solution into the classes 
\begin{align}
u &\in L^\infty(0,T; H^{n+1-\sigma} ) \cap L^2(0,T; H^n) \label{e:aprun} \\ 
\rho &\in L^\infty(0,T;  H^n)   \cap  L^2(0,T; H^{\sigma/2 + n})  \label{e:aprrhon}
\end{align}
We thus have shown that
 \be
\begin{aligned}
&\sup_{T\in [0,T^*)} \Vert \rho\Vert_{L^\infty(0, T;  H^n)}+ \sup_{T\in [0,T^*)}\Vert \rho\Vert_{L^2(0, T; H^{\sigma/2 + n})}+\sup_{T\in [0,T^*)}\Vert u\Vert_{L^\infty(0, T; H^{n+1-\sigma} )}+\sup_{T\in [0,T^*)}\Vert u\Vert_{L^2(0, T; H^{n})}\\
&\qquad\qquad\qquad\qquad\le F\Big(\Vert (\rho_0, u_0)\Vert_{H^n(\T)\times H^{n+1-\sigma}(\T)},  \Vert f\Vert_{L^\infty(0, T^*; C^{n})},  \frac{1}{ \cl }, T^*\Big)<\infty
\end{aligned}
\ee
for $n\ge 2$. 
 Appealing to local existence, established by Prop. \ref{prop:local}, the solution can be extended past $T^*$.

\bibliographystyle{plain}
\bibliography{CDSbiblio}

\end{document}